	\def\MR#1{}
\numberwithin{equation}{section}
\newtheorem{prop}{Proposition}[section]
\newtheorem{theorem}[prop]{Theorem}
\newtheorem{remark}[prop]{Remark}
\def\begeq{\begin{equation}}
\def\endeq{\end{equation}}
\def\<{\langle}
\def\>{\rangle}
\def\({\left(}
\def\){\right)}
\def\Ric{{\rm Ric}}
\def\p{\partial}
\begin{document}

\title[The Modified K\'ahler-Ricci flow, II]{The Modified K\"ahler-Ricci Flow, II}

\author{Haotian Wu}
\address{School of Mathematics and Statistics, The University of Sydney, NSW 2006, Australia}
\email{haotian.wu@sydney.edu.au}

\author{Zhou Zhang}
\address{School of Mathematics and Statistics, The University of Sydney, NSW 2006, Australia}
\email{zhou.zhang@sydney.edu.au}


\keywords{Modified K\"ahler-Ricci flow, degenerate Calabi-Yau equation.}

\subjclass[2010]{53C44, 32W20}

\maketitle

\begin{abstract} We improve the understanding of both finite time and infinite time singularities of the modified K\"ahler-Ricci flow as initiated by the second author of this paper in \cite{modified}. This is done by relating the modified K\"ahler-Ricci flow with the recent studies on the classic K\"ahler-Ricci flow and the degenerate complex Monge-Amp\`ere equation.
\end{abstract}

\section{Introduction}\label{intro}

We begin by recalling the set up of a flow of K\"ahler-Ricci type which is first introduced in Section 9.4.1 of \cite{thesis} by the second author of this paper. Let $\omega_0$ be a K\"ahler metric over a closed manifold $X$ with $\dim_{\mathbb{C}}X=n\geqslant 2$, and $\omega_\infty$ be a smooth real closed $(1,1)$-form over $X$.

Set $\omega_t=\omega_\infty+e^{-t}(\omega_0-\omega_\infty)$ and consider the following parabolic flow at the level
of metric potential in spacetime:
\begin{equation}
\label{eq:M-SKRF}
\frac{\partial u}{\partial t}={\rm log}\frac{(\omega_t+\sqrt{-1}\partial\bar{\partial}u)^n}{\Omega}, \quad\quad u(0,\cdot)=0,
\end{equation}
where $\Omega$ is a smooth volume form over $X$.

Let $\widetilde{\omega}_t=\omega_t+\sqrt{-1}\partial\bar
{\partial}u$. Then the corresponding flow at the level of metric (or equivalently, form) is obtained by taking $\sqrt{-1}\p\bar\p$ of both sides of equation \eqref{eq:M-SKRF}:
\begin{equation}
\label{eq:M-KRF} 
\frac{\partial\widetilde{\omega}_t}{\partial t}=-{\rm Ric}(\widetilde{\omega}_t)+{\rm Ric}(\Omega)-e^{-t}
(\omega_0-\omega_\infty),\quad\quad \widetilde{\omega}_0=\omega_0,
\end{equation}
where the form ${\rm Ric}(\Omega)$ is a natural generalisation of the Ricci form for a K\"ahler metric, using the volume form $\Omega$ instead of the volume form for a K\"ahler metric. This makes use of the classic calculation of Ricci form in K\"ahler geometry.

The flow \eqref{eq:M-SKRF} arises naturally as a variation of the more classic K\"ahler-Ricci flow considered earlier, e.g. in \cite{t-znote}, especially at the level of scalar potential flow. In \cite{modified}, the second author of this paper obtained the optimal existence result for this flow, i.e. the classical (i.e. smooth metric) solution exists as long as the class $[\omega_t]$ remains K\"ahler, and proved strong convergence as $t\to\infty$ when the limiting class at time infinity, denoted by $[\omega_\infty]$, is K\"ahler. Together with the subsequent work \cite{yuan-convergence} by Yuan, we have local smooth convergence when $[\omega_\infty]$ is big and the corresponding limiting class (at either finite or infinite time) is semi-ample.

One main motivation for introducing the modified K\"ahler-Ricci flow \eqref{eq:M-KRF}, or equivalently \eqref{eq:M-SKRF}, is to study the corresponding degenerate complex Monge-Amp\`ere equation, i.e. the degenerate Calabi-Yau equation
\begin{equation}
\label{eq:D-CY}
(\omega_\infty+\sqrt{-1}\p\bar\p U)^n=\Omega,
\end{equation}
when $[\omega_\infty]$ is no longer K\"ahler, i.e. degenerate. The flow \eqref{eq:M-KRF} can be viewed as a sophisticated way of setting
up the method of continuity. Of course, one can use alternative ways to set up the continuity method, see for example the discussion in Section 8.5.3 of \cite{thesis}. Many independent works, e.g., \cite{tosatti} by Tosatti and \cite{yuguang} by Zhang, were carried out in this direction. For the collapsed case, there is the recent works by Hein-Tosatti \cite{hein-tosatti} in the elliptic setting focusing on higher order regularity. 
 
In this paper, we focus on the flow approach, i.e. (\ref{eq:M-KRF}) and equivalently (\ref{eq:M-SKRF}). Many techniques effective for the classic K\"ahler-Ricci flow can be adapted to the current setting. 

In principle, we try to avoid extra assumptions on $[\omega_\infty]$ or $K_X=-c_1(X)$, resulting in tremendous advantage when comparing and combining with various earlier works on K\"ahler-Ricci flows and even Ricci flow in general. Under the modified K\"ahler-Ricci flow (\ref{eq:M-KRF}), the K\"ahler class  
$$[\widetilde\omega_t]=[\omega_t]=e^{-t}[\omega_0]+(1-e^{-t})[\omega_\infty]$$
starts from the K\"ahler class $[\omega_0]$ at $t=0$ and would in general become non-K\"ahler at time
\begin{align}\label{defT}
T=\sup\,\{t \,|\, e^{-t}[\omega_0]+(1-e^{-t})[\omega_\infty] ~\text{is K\"ahler}\}\in (0, \infty]
\end{align} 
with $[\omega_T]$ on the boundary of the K\"ahler cone of $X$. With a slight abuse of notation from algebraic geometry, we say $[\omega_T]$ is nef. (i.e. numerically effective). In the same spirit as \cite{t-znote}, we have the following optimal existence result. 

\begin{theorem}
The modified K\"ahler-Ricci flow (\ref{eq:M-KRF}), or equivalently (\ref{eq:M-SKRF}), exists exactly for $t\in [0, T)$, where $T$ is defined in \eqref{defT}.  
\end{theorem}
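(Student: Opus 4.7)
The plan is to split into the two directions and then handle the nontrivial one, existence on $[0,T)$, by the standard Tian--Zhang strategy from \cite{t-znote} adapted to the modified flow.

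For the upper bound direction, suppose a smooth solution $\widetilde\omega_t$ exists on a closed interval $[0,t_0]$. Then $[\widetilde\omega_{t_0}]=[\omega_{t_0}]$ is represented by a K\"ahler metric and so lies in the K\"ahler cone; by the definition \eqref{defT} this forces $t_0<T$. Hence existence cannot extend to or past $T$.

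For existence on $[0,T)$, short-time existence follows from standard parabolic theory applied to \eqref{eq:M-SKRF} at $t=0$, so by the usual continuation argument it suffices to show that for every $T'<T$ the solution admits uniform $C^\infty$ bounds on $[0,T']$. Since $[\omega_{T'}]$ is K\"ahler, fix a smooth K\"ahler form $\omega'\in[\omega_{T'}]$ and write $\omega_t=\omega'+\sqrt{-1}\partial\bar\partial\rho_t$ for a smooth spacetime function $\rho_t$ on $[0,T']$ (the class $[\omega_t]$ depends affinely on $e^{-t}$). Setting $\varphi_t=\rho_t+u$, the flow becomes
\[
\frac{\partial \varphi_t}{\partial t}-\frac{\partial \rho_t}{\partial t}=\log\frac{(\omega'+\sqrt{-1}\partial\bar\partial\varphi_t)^n}{\Omega},
\]
a parabolic complex Monge--Amp\`ere equation in the fixed K\"ahler class $[\omega']$ with a smooth, bounded forcing term $\partial_t\rho_t$.

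The a priori estimates then proceed along a well-understood chain. First, the maximum principle applied directly to \eqref{eq:M-SKRF} gives a uniform $C^0$ bound on $u$ (equivalently on $\varphi_t$) on $[0,T']$. Next, differentiating \eqref{eq:M-SKRF} in $t$ and applying the maximum principle to $e^t\dot u$, together with analogous quantities that absorb the extra drift $e^{-t}(\omega_0-\omega_\infty)$ in \eqref{eq:M-KRF}, controls $\dot u$ and hence the volume ratio $\widetilde\omega_t^n/\Omega$ from above and below. The second order bound is the crux: apply the maximum principle to
\[
\log \operatorname{tr}_{\widetilde\omega_t}\omega'-A\varphi_t
\]
for $A$ large; the Aubin--Yau computation, using the lower bound on the bisectional curvature of the fixed reference $\omega'$ and the already established $\dot u$ bound, yields a uniform $C^2$ bound so that $\widetilde\omega_t$ is uniformly equivalent to $\omega'$ on $[0,T']$. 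The extra term $-e^{-t}(\omega_0-\omega_\infty)$ is a smooth bounded $(1,1)$-form and only contributes harmless lower order error. Once $\widetilde\omega_t$ and $\omega'$ are uniformly equivalent, a Calabi-type $C^3$ estimate (or the parabolic Evans--Krylov theorem) followed by Schauder bootstrapping delivers uniform $C^{k,\alpha}$ bounds on $[0,T']$ for all $k$.

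The main obstacle is the second order estimate, because the constants in the Aubin--Yau calculation depend on $\omega'$, and as $T'\uparrow T$ we can no longer choose a common K\"ahler reference, consistent with the fact that the flow should degenerate at $t=T$. This is precisely the reason existence can be established only on each closed subinterval $[0,T']$ with $T'<T$, not on $[0,T]$ itself, matching the sharp endpoint in the statement.
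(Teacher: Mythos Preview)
Your overall strategy is correct and is precisely the Tian--Zhang scheme the paper has in mind: note that the paper does \emph{not} actually prove this theorem in the body, it only says ``This result holds for very general flow of K\"ahler-Ricci type. The proof is described in \cite{modified}'' (in the spirit of \cite{t-znote}). So your sketch is supplying what the paper defers to a reference, and the chain $C^0\to\dot u\to C^2$ (Aubin--Yau/Schwarz) $\to$ Calabi/Evans--Krylov $\to$ Schauder is exactly the right one.

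There is one genuine slip in your reduction step. You fix a K\"ahler form $\omega'\in[\omega_{T'}]$ and write $\omega_t=\omega'+\sqrt{-1}\partial\bar\partial\rho_t$, but the cohomology classes $[\omega_t]$ vary with $t$, so $\omega_t-\omega'$ is \emph{not} $\partial\bar\partial$-exact for $t\ne T'$ and no such $\rho_t$ exists. The standard fix is to keep a moving K\"ahler reference: writing $\omega'=\omega_{T'}+\sqrt{-1}\partial\bar\partial f$, one has for $t\in[0,T']$
\[
\omega_t=\frac{e^{-t}-e^{-T'}}{1-e^{-T'}}\,\omega_0+\frac{1-e^{-t}}{1-e^{-T'}}\,\omega_{T'},
\]
so $\omega_t+\sqrt{-1}\partial\bar\partial u=\hat\omega_t+\sqrt{-1}\partial\bar\partial\bigl(u-\tfrac{1-e^{-t}}{1-e^{-T'}}f\bigr)$ with $\hat\omega_t$ a convex combination of the two K\"ahler forms $\omega_0$ and $\omega'$, hence uniformly K\"ahler on $[0,T']$. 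With this adjustment all your maximum-principle arguments go through with constants depending on $T'$, exactly as you intend. (A minor aside: for the quantity $\log\operatorname{tr}_{\widetilde\omega_t}\omega'-A\varphi$ the relevant curvature assumption is an \emph{upper} bound on the bisectional curvature of the fixed form; the lower bound you mention goes with the dual quantity $\log\operatorname{tr}_{\omega_0}\widetilde\omega_t-Au$, which is the more common choice here.)
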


This result holds for very general flow of K\"ahler-Ricci type. The proof is described in \cite{modified}. 

Now we consider the two cases of $T<\infty$ and $T=\infty$, respectively.

When $T<\infty$, the classical flow, i.e. the time slice being smooth and metric, has to stop because it is no longer K\"ahler at $T$ even for cohomology, and so there must be a finite time singularity. In this case, we have the following geometric characterisation of the singularity and also justify the flow weak limit for cases of interest.

\begin{theorem}
\label{th:finite-time-singularity}
Consider the modified K\"ahler-Ricci flow (\ref{eq:M-KRF}) with $T<\infty$. We have 
$$\lim_{T_0\to T^-}\inf_{[0, T_0]\times X}\log\frac{\widetilde\omega^n_t}{\Omega}\to-\infty.$$  
Furthermore, if there is $\varphi\in PSH_{\omega_T}(X)\cap L^\infty(X)$, then we have the flow weak limit
$$\widetilde\omega_t\to \omega_T+\sqrt{-1}\p\bar\p U$$
as $t\to T^-$ for $U\in PSH_{\omega_T}(X)\cap L^\infty(X)$. 
\end{theorem}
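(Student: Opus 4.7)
The plan is to prove the two statements separately: the first by a contradiction argument exploiting the maximal-time characterisation of $T$, the second by pluripotential compactness combined with uniqueness for the degenerate Monge--Amp\`ere equation.

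For the volume degeneration, suppose for contradiction that $\inf_{[0,T_0]\times X}\log(\widetilde\omega_t^n/\Omega)$ stayed bounded below as $T_0\to T^-$. Since $\p u/\p t = \log(\widetilde\omega_t^n/\Omega)$, integrating against $u(0,\cdot)=0$ immediately yields a uniform lower bound on $u$ on $[0,T)\times X$. An unconditional upper bound on $u$ follows from the maximum principle applied to \eqref{eq:M-SKRF} together with the smooth boundedness of the reference family $\omega_t$ on $[0,T]$, analogous to the classical case in \cite{t-znote} and its adaptation to the modified flow in \cite{modified}. With two-sided $C^0$ control of $u$, the Aubin--Yau second order estimate bounds $\mathrm{tr}_{\omega_0}\widetilde\omega_t$, and parabolic Evans--Krylov together with standard bootstrap then produce uniform $C^k$ estimates on $[0,T)\times X$. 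This would extend the classical solution past $T$, contradicting the definition of $T$ in \eqref{defT}.

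For the weak limit, the bounded $\omega_T$-PSH function $\varphi$ serves as a global pluripotential reference. The first step is to establish a uniform $L^\infty$ bound on $u$ on $[0,T)\times X$ via a Ko{\l}odziej-type estimate for the complex Monge--Amp\`ere equation in the degenerate class $[\omega_T]$, in the form developed by Eyssidieux--Guedj--Zeriahi and used in related contexts in \cite{tosatti} and \cite{yuguang}; the existence of a bounded $\omega_T$-PSH function is exactly the hypothesis that unlocks these estimates. Once $\|u(t,\cdot)\|_{L^\infty(X)}\leqslant C$ uniformly in $t$, the family $\{u(t,\cdot)\}$ is precompact in $L^1(X)$ by compactness of $L^\infty$-bounded quasi-PSH functions, and any weak subsequential limit $U$ lies in $PSH_{\omega_T}(X)\cap L^\infty(X)$ and satisfies the limiting equation $(\omega_T+\sqrt{-1}\p\bar\p U)^n = e^F\Omega$ for an appropriate bounded $F$ extracted from \eqref{eq:M-SKRF}. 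Uniqueness of bounded pluripotential solutions to this equation then promotes subsequential convergence to the full convergence of $\widetilde\omega_t$ claimed in the theorem.

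The main obstacle is the uniform $L^\infty$ bound on $u$ up to $T$ in the degenerate case. Because $[\omega_T]$ is only nef, classical parabolic $C^0$ arguments relying on strict positivity of the reference form are not directly available, so one must invoke global pluripotential theory. The hypothesis $PSH_{\omega_T}(X)\cap L^\infty(X)\neq\emptyset$ is precisely what makes Ko{\l}odziej-type estimates available, but transporting those elliptic results to the parabolic setting \eqref{eq:M-SKRF} and keeping constants uniform as $t\to T^-$ is the principal technical work.
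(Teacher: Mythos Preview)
Your argument for the volume degeneration is correct and close to the paper's in spirit: both assume a lower bound on $\partial u/\partial t$, derive uniform metric equivalence $\widetilde\omega_t\sim\omega_0$ via a parabolic Schwarz-type inequality, and deduce a contradiction with the definition of $T$. The paper's endgame is slightly more minimal---it invokes the Demailly--Paun numerical criterion for the K\"ahler cone directly from the metric lower bound $\widetilde\omega_t\geqslant C^{-1}\omega_0$, rather than bootstrapping to $C^\infty$ and extending the flow---but your route also works.

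Your argument for the weak limit has a genuine gap. You propose extracting a subsequential limit $U$ satisfying $(\omega_T+\sqrt{-1}\partial\bar\partial U)^n=e^F\Omega$ for a \emph{bounded} $F$, and then invoking uniqueness of bounded solutions to promote subsequential to full convergence. But the first half of this very theorem asserts that $\partial u/\partial t=\log(\widetilde\omega_t^n/\Omega)$ is unbounded below as $t\to T^-$, so no bounded $F$ can be extracted from the flow equation; your uniqueness step therefore has no well-posed equation to act on, and compactness alone leaves you with possibly many subsequential limits. The paper sidesteps this entirely. After the normalisation of Section~\ref{prelim} one has $\partial u/\partial t\leqslant 0$, so $u$ is \emph{monotone decreasing} in $t$, and a genuine pointwise limit exists automatically once a uniform lower bound for $u$ is established. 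That lower bound is obtained not by elliptic pluripotential theory but by a direct parabolic maximum-principle argument applied to quantities of the form $(1-e^{t-T})\partial u/\partial t+u-\varphi_m$, where $\varphi_m$ is a smooth (B\l ocki--Ko\l odziej or Demailly) regularisation of the bounded $\varphi\in PSH_{\omega_T}(X)$; see \eqref{eq:combo-2} and \eqref{eq:5}. Monotonicity plus the lower bound yields the full limit $u_T\in PSH_{\omega_T}(X)\cap L^\infty(X)$ directly, and weak convergence of $\widetilde\omega_t$ then follows from Bedford--Taylor. No limiting equation and no uniqueness argument are needed.
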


When $T=\infty$, there are two possibilities. Firstly, $[\omega_\infty]$ is K\"ahler. It is shown in \cite{modified} that the flow converges strongly to the smooth solution of 
$$\Ric(\omega_\infty+\sqrt{-1}\p\bar\p U)=\Ric(\Omega), \quad \text{i.e.} \quad (\omega_\infty+\sqrt{-1}\p\bar\p U)^n=A\Omega$$
for $A=\frac{[\omega_\infty]^n}{\int_X\Omega}$, whose existence and uniqueness is nothing but the classic Calabi-Yau Theorem \cite{calabi-yau}. Secondly, $[\omega_\infty]$ is on the boundary the K\"ahler cone. Then it is impossible to have strong convergence to a metric because of cohomology obstruction. There must be singularity forming at $T=\infty$, i.e. the infinite time singularity case. The limit is then a generalised solution to the degenerate Calabi-Yau equation (\ref{eq:D-CY}). 

Meanwhile, we clearly have the global volume $[\omega_T]^n\geqslant 0$ as the limit of $[\omega_t]^n$. The case of $[\omega_T]^n>0$ is called non-collapsed, which more or less corresponds to the case of $[\omega_T]$ being big from algebraic geometry. Then we have local smooth convergence of the flow away from the stable base locus set of $[\omega_T]$ by the results in \cite{yuan-convergence, modified}. From pluripotential theory in \cite{demailly-pali, ey-gu-ze, zzo}, we also know the uniqueness, boundedness and even continuity of the limit metric potential. In summary, we have the following theorem. 

\begin{theorem}
\label{weak-convergence}

Under the modified K\"ahler-Ricci flow (\ref{eq:M-KRF}) with $[\omega_T]$ semi-ample and big, $\widetilde\omega_t$ converges smoothly out of the stable base locus set of $[\omega_T]$ with the limit 
$$\widetilde\omega_T=\omega_T+\sqrt{-1}\p\bar\p U, \qquad U\in PSH_{\omega_T}(X)\cap C^0(X).$$ 
Moreover, if $T=\infty$, then $\frac{\p u}{\p t}\geqslant -C$ uniformly for all time, i.e. the volume form has a uniform
positive lower bound, and the limit is the unique weak solution to the degenerate Calabi-Yau equation (\ref{eq:D-CY}). 

\end{theorem}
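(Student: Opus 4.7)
My plan is to split the argument into three components: (a) smooth convergence of $\widetilde\omega_t$ off the stable base locus of $[\omega_T]$ together with the regularity $U \in PSH_{\omega_T}(X) \cap C^0(X)$; (b) for $T = \infty$, an upper bound on $\partial_t u$; and (c) the harder uniform lower bound $\partial_t u \geq -C$, followed by the identification of the limit as the unique weak solution of \eqref{eq:D-CY}.

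Component (a) is largely a matter of invoking existing results. The local smooth convergence off the stable base locus is supplied by \cite{yuan-convergence, modified}, using that semi-ampleness of $[\omega_T]$ allows fixing a smooth representative $\omega_T \geq 0$ that is strictly positive off this base locus. Along any sequence $t_k \to T$, the potentials $u(t_k, \cdot)$ form a relatively compact family of $\omega_{t_k}$-psh functions in $L^1(X)$, hence admit a subsequential limit $U \in PSH_{\omega_T}(X)$; combined with the pointwise smooth convergence off the base locus, this pins down $U$ unambiguously. Continuity $U \in C^0(X)$ then follows from \cite{demailly-pali, ey-gu-ze, zzo}: on a big class, bounded quasi-psh solutions of degenerate Monge--Amp\`ere equations with smooth positive right-hand side are automatically continuous.

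For component (b), the key identities are
$$(\partial_t - \Delta_{\widetilde\omega_t})\partial_t u = -e^{-t}\mathrm{tr}_{\widetilde\omega_t}(\omega_0 - \omega_\infty), \qquad (\partial_t - \Delta_{\widetilde\omega_t})(\partial_t u + u) = \partial_t u - n + \mathrm{tr}_{\widetilde\omega_t}\omega_\infty.$$
Applying the parabolic maximum principle to $\partial_t u + u$ at its spacetime minimum, together with $\omega_\infty \geq 0$, yields $\partial_t u \leq n + C$. A uniform $L^\infty$ bound on $u$ itself follows from parabolic degenerate Monge--Amp\`ere estimates of Kolodziej/EGZ type, using that $\widetilde\omega_t^n = e^{\partial_t u}\Omega$ is uniformly bounded above.

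Component (c), the lower bound $\partial_t u \geq -C$, is the main obstacle. My plan is to compare $u$ with the bounded EGZ solution $\phi \in PSH_{\omega_\infty} \cap L^\infty(X)$ of $(\omega_\infty + \sqrt{-1}\partial\bar\partial\phi)^n = c\Omega$, and apply the parabolic maximum principle to a quantity such as $Q = \partial_t u - A(u - \phi)$ for a suitable $A > 0$, exploiting at a spacetime extremum the algebraic trace inequality relating $\mathrm{tr}_{\widetilde\omega_t}\omega_\phi$ and $(\omega_\phi^n/\widetilde\omega_t^n)^{1/n}$ to force a lower bound on $\partial_t u$ (with the $e^{-t}$-terms harmlessly absorbed at large $t$). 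The principal difficulty is that $\widetilde\omega_t$ may degenerate near the base locus, so fixed-form traces like $\mathrm{tr}_{\widetilde\omega_t}\omega_\infty$ can blow up, and $\phi$ is only bounded rather than smooth; this necessitates a regularization, e.g., working with $\omega_{\infty,\epsilon} = \omega_\infty + \epsilon\omega_0$ (K\"ahler for $\epsilon > 0$) and the corresponding smooth $\phi_\epsilon$, and passing to the limit with estimates uniform in $\epsilon$. Once $|\partial_t u| \leq C$ is in hand, $\widetilde\omega_t^n/\Omega = e^{\partial_t u}$ is pinched between positive constants, so the limit $\widetilde\omega_T = \omega_T + \sqrt{-1}\partial\bar\partial U$ solves \eqref{eq:D-CY} in the non-pluripolar sense, and uniqueness of $U \in PSH_{\omega_T}(X) \cap L^\infty(X)$ follows from the comparison principle in \cite{demailly-pali, ey-gu-ze}.
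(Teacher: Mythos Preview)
The paper does not give a self-contained proof of this theorem: it is stated as a summary of prior results, with the sentence immediately preceding it saying ``we have local smooth convergence of the flow away from the stable base locus set of $[\omega_T]$ by the results in \cite{yuan-convergence, modified}. From pluripotential theory in \cite{demailly-pali, ey-gu-ze, zzo}, we also know the uniqueness, boundedness and even continuity of the limit metric potential.'' Your component (a) cites exactly the same sources for exactly the same conclusions, so there is nothing to compare there.

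For component (b), note that the paper already establishes $\frac{\partial u}{\partial t}\leqslant C$ in Section~2 for \emph{all} $T$, not just $T=\infty$, by taking two $t$-derivatives and applying the maximum principle to $\frac{\partial}{\partial t}\bigl(\frac{\partial u}{\partial t}+u\bigr)$; your argument is different and contains a slip --- applying the maximum principle to $\partial_t u+u$ at a spacetime \emph{minimum} does not give an \emph{upper} bound on $\partial_t u$. At a spatial minimum one has $\Delta(\partial_t u+u)\geqslant 0$, hence $(\partial_t-\Delta)(\partial_t u+u)\leqslant \partial_t(\partial_t u+u)$, which goes the wrong way for an upper bound. You presumably meant a maximum, but then for $T=\infty$ you must first argue why the spacetime maximum is attained at finite time, which is the very issue at hand. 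The paper's second-derivative route avoids this circularity.

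For component (c), the paper again defers to \cite{modified, yuan-convergence} and gives no argument in the body; your plan to compare with the bounded EGZ solution $\phi$ of $(\omega_\infty+\sqrt{-1}\partial\bar\partial\phi)^n=c\,\Omega$ via a quantity like $\partial_t u-A(u-\phi)$ is a standard and workable strategy, and you correctly flag the need to regularise $\phi$ (since it is only bounded, not smooth). This is essentially how the cited references proceed. One point you leave implicit: to conclude that the limit solves \eqref{eq:D-CY} with right-hand side exactly $\Omega$ (and not $e^{c}\Omega$ for some constant $c$), you need $\frac{\partial u}{\partial t}\to 0$, not merely $|\frac{\partial u}{\partial t}|\leqslant C$; this follows from \eqref{u_t-by-u} (which gives $\frac{\partial u}{\partial t}\leqslant \frac{u+nt}{e^t-1}\to 0$) together with the analogous lower estimate once $u$ is bounded.
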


However, the above theorem is insufficient to control the global metric geometry of the flow. By adapting the notions and arguments in \cite{song-weinkove}, we have the following control of the global geometry along the flow. 

\begin{theorem} 
\label{th:metric-control}

Consider a K\"ahler manifold $X$ with $\Ric(\Omega)\leqslant 0$ (i.e. $c_1(X)$ non-positive). For the modified K\"ahler-Ricci flow (\ref{eq:M-KRF}) with $[\omega_\infty]$ generating a canonical surgical contraction, if $\omega_0$ is sufficiently positive, then the flow has uniform bounded diameter and converges in the Gromov-Hausdorff sense to the metric compactification of the smooth flow limit out of the stable base locus set of $[\omega_\infty]$.     

\end{theorem}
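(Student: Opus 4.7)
Since $[\omega_\infty]$ generates a canonical surgical contraction, there is a birational morphism $\pi:X\to Y$ onto a normal projective variety together with a K\"ahler class $[\omega_Y]$ on $Y$ such that $[\omega_\infty]=\pi^*[\omega_Y]$, the exceptional locus $\mathrm{Exc}(\pi)$ coinciding with the stable base locus of $[\omega_\infty]$. By Theorem~\ref{weak-convergence}, $\widetilde\omega_t$ already converges smoothly on $X\setminus\mathrm{Exc}(\pi)$ to the pullback of a K\"ahler metric on $Y$, and $\widetilde\omega_t^n/\Omega$ has a uniform positive lower bound. The plan is to adapt \cite{song-weinkove} and upgrade this local smooth convergence to global Gromov-Hausdorff convergence via a uniform comparison between $\widetilde\omega_t$ and $\pi^*\omega_Y$.

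First I would derive two-sided uniform bounds on $u$ and on $\partial u/\partial t=\log(\widetilde\omega_t^n/\Omega)$. The lower bound on $\partial u/\partial t$ is the $T=\infty$ clause of Theorem~\ref{weak-convergence}; the upper bound comes from the maximum principle applied to the evolution equation for $\partial u/\partial t$, using $\Ric(\Omega)\leqslant 0$ and the positivity of $\omega_0$, with the exponentially decaying drift treated as a controlled perturbation. A standard pluripotential estimate in the spirit of \cite{ey-gu-ze,zzo} then yields a $C^0$-bound on $u$. Applying next the parabolic Schwarz lemma to $\pi:(X,\widetilde\omega_t)\to(Y,\omega_Y)$, the sufficient positivity of $\omega_0$ absorbs the bisectional curvature of the fixed target $(Y,\omega_Y)$ and, combined with $\Ric(\Omega)\leqslant 0$, gives a uniform upper bound on $\mathrm{tr}_{\widetilde\omega_t}(\pi^*\omega_Y)$, hence
\[
\widetilde\omega_t\geqslant c\,\pi^*\omega_Y
\]
for some $c>0$ independent of $t$.

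This lower bound controls $\widetilde\omega_t$-distances from below by pullbacks of $\omega_Y$-distances, so points lying over distinct points of $Y$ remain uniformly separated along the flow. For the upper diameter bound I would combine the smooth convergence on $X\setminus\mathrm{Exc}(\pi)$ with volume decay estimates near $\mathrm{Exc}(\pi)$: each point close to the exceptional locus can be joined to the smoothly converging region by a curve of uniformly bounded $\widetilde\omega_t$-length, exploiting that the $\widetilde\omega_t$-volume of any fixed tubular neighborhood of $\mathrm{Exc}(\pi)$ is controlled by $\int\widetilde\omega_t^n$ together with the relation $[\omega_\infty]=\pi^*[\omega_Y]$. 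The resulting two-sided distance comparison identifies the Gromov-Hausdorff limit with the metric completion of $(X\setminus\mathrm{Exc}(\pi),\omega_\infty+\sqrt{-1}\partial\bar\partial U)$, i.e.\ the metric compactification of the smooth flow limit, exactly as in \cite{song-weinkove}.

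The hardest step will be the length control of curves that enter a neighborhood of $\mathrm{Exc}(\pi)$. In \cite{song-weinkove} this is powered by Perelman's scalar curvature and diameter estimates for the classical K\"ahler-Ricci flow, which rely on the gradient-flow structure of the KRF; the modified flow \eqref{eq:M-KRF} carries the additional term $-e^{-t}(\omega_0-\omega_\infty)$ that breaks this structure. I expect two routes to work: either adapt Perelman's entropy monotonicity by an error term whose time integral converges because of the exponential decay, or bypass Perelman altogether and obtain the diameter estimate purely from the $C^0$-bound on $u$, the uniform volume lower bound, and the Schwarz-lemma comparison, at the cost of more delicate pluripotential arguments near $\mathrm{Exc}(\pi)$.
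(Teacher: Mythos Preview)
Your outline has the right architecture (potential/volume bounds, Schwarz lemma, then Song--Weinkove type geometry), but the decisive estimate is missing and your substitutes will not close the gap.

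The Schwarz lemma you apply gives a \emph{lower} bound $\widetilde\omega_t\geqslant c\,\pi^*\omega_Y$. That is useful for the identification of the limit, but it is useless for the diameter bound near the exceptional set: there $\pi^*\omega_Y$ degenerates, so the inequality says nothing about how large $\widetilde\omega_t$ might be in the contracted directions or transversally to $E$. What the paper actually proves (Theorem~\ref{th:improved-metric-estimate}) is a pair of \emph{upper} bounds
\[
\widetilde\omega_t\leqslant C|\sigma|^{-2\beta}F^*\omega,\qquad
\widetilde\omega_t\leqslant C(A)|\sigma|^{-\frac{2\beta A}{A+1}}\omega_0,
\]
obtained by running the maximum principle on the combined quantity
\[
Q_\delta=\log\<\omega_0,\widetilde\omega_t\>+A\log\bigl(|\sigma|^{2\beta}\<F^*\omega,\widetilde\omega_t\>\bigr)+A\delta\log|\sigma|^2-ABv,
\]
using the parabolic Schwarz inequalities for both $\<\omega_0,\widetilde\omega_t\>$ and $\<F^*\omega,\widetilde\omega_t\>$, together with the fact that $[\omega_T]-\epsilon E$ is K\"ahler for small $\epsilon>0$. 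The hypotheses $\Ric(\Omega)\leqslant 0$ and $\omega_0\geqslant\omega_\infty$ enter precisely to make the extra term $\Ric(\Omega)-e^{-t}(\omega_0-\omega_\infty)$ non-positive, so that the Schwarz inequalities take clean form. For a canonical surgical contraction $\beta=1$; the first bound then forces the contracted $\mathbb{CP}^{n-1}$'s to shrink metrically, while the second bound has exponent $\frac{2A}{A+1}<2$, which is exactly the integrability needed to bound the $\widetilde\omega_t$-length of a radial segment reaching $E$. That is the whole diameter argument in \cite{song-weinkove}, and it does not use Perelman's monotonicity at all.

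Your two proposed routes for the ``hardest step'' therefore miss the mark: there is no entropy to salvage, and a volume bound on a tube around $E$ cannot by itself control lengths of curves entering the tube without some metric upper bound of the kind above. The correction is to replace your lower-bound Schwarz lemma with the weighted upper-bound estimate on $\<\omega_0,\widetilde\omega_t\>$ and $\<F^*\omega,\widetilde\omega_t\>$, after which the rest of your outline (appeal to \cite{song-weinkove, song-weinkove-2} for the geometric conclusion) goes through.
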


\begin{remark}

The notion of a canonical surgical contraction is from \cite{song-weinkove} and will be recalled in the proof of Theorem \ref{th:metric-control}. For the limiting equation (\ref{eq:D-CY}), it is assumed that $\Ric(\Omega)=0$. Together with the uniqueness of the solution for the limiting equation (\ref{eq:D-CY}), the assumptions in Theorem \ref{th:metric-control} is not restrictive. 

\end{remark}

In comparison, the case of $[\omega_T]^n=0$ is called collapsed, which often corresponds to the case of $[\omega_T]=F^*[\omega]$ for a holomorphic map $F$ with the image of dimension $n-r<\dim_\mathbb{C}X=n$, i.e. a fibration map of generic fibre dimension $r\in \{1, \cdots, n\}$. In this case, we have the following result when $T=\infty$. 

\begin{theorem}
\label{th:collapsed-volume}
For the modified K\"ahler-Ricci flow (\ref{eq:M-SKRF}) with a collapsed infinite time singularity, by properly choosing $\Omega$, we have positive constants $C$ and $A$ such that for any $S>0$,
\begin{align*}-\frac{r^2}{2}t^2 & \leqslant u\leqslant C+At-\frac{r^2}{2}t^2, \\
-\(\frac{A}{1-e^{-S}}+r\)t-C(S) & \leqslant \frac{\p u}{\p t}\leqslant \(\frac{A}{e^S-1}-r\)t+C(S),
\end{align*}
where the constant $C(S)$ depending on $S$ over $X\times [0,\infty)$. 
\end{theorem}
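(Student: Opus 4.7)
The plan is to obtain the four inequalities by combining one-variable ODE super- and subsolutions for $u$ with a convex-combination ``shift-by-$S$'' trick for $u_t$. Two structural facts drive the argument. First, the hypothesis $[\omega_\infty]=F^*[\omega]$ with $\dim_{\mathbb{C}}F(X)=n-r$ allows the choice $\omega_\infty=F^*\omega_Y$ for a K\"ahler form $\omega_Y$ on the base, making $\omega_\infty^{n-r+1}\equiv 0$ pointwise; second, we have the translation identity
\begin{equation*}
\omega_{t+S}=(1-e^{-S})\omega_\infty+e^{-S}\omega_t
\end{equation*}
realising $\omega_{t+S}$ as a genuine convex combination of $\omega_\infty$ and $\omega_t$. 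The ``proper choice of $\Omega$'' is a smooth positive volume form adapted to the fibration so that the binomial expansion of $\omega_t^n$ gives the two-sided pointwise bound $c\,e^{-rt}\,\Omega\le\omega_t^n\le C\,e^{-rt}\,\Omega$ on an open dense set, the dominant term being $\binom{n}{n-r}(1-e^{-t})^{n-r}e^{-rt}\,\omega_\infty^{n-r}\wedge\omega_0^{r}$.

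The bounds on $u$ I would obtain by comparison with the scalar barriers $\bar\phi(t)=C+At-\frac{r^2}{2}t^2$ and $\underline\phi(t)=-\frac{r^2}{2}t^2$. For the upper bound, at a spatial maximum of $u-\bar\phi$, $\sqrt{-1}\partial\bar\partial u\le 0$ gives $\widetilde\omega_t\le\omega_t$ and so $u_t\le\log(\omega_t^n/\Omega)\le -rt+C_0$, which is matched by $\bar\phi'(t)$ after choosing $A$, $C$ large. For the lower bound, at a spatial minimum of $u-\underline\phi$ the reverse Hessian inequality gives $\widetilde\omega_t\ge\omega_t\ge 0$ and $u_t\ge\log(\omega_t^n/\Omega)\ge -rt+\log c$, which exceeds $\underline\phi'(t)$ after absorbing the constant $\log c$ by scaling $\Omega$.

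For the $u_t$-bounds, I would apply the shift trick to the auxiliary function $\Psi_S(t,x)=u(t+S,x)-e^{-S}u(t,x)$. At a spatial maximum $x_0$ of $\Psi_S(t,\cdot)$, the Hessian inequality $\sqrt{-1}\partial\bar\partial u(t+S,x_0)\le e^{-S}\sqrt{-1}\partial\bar\partial u(t,x_0)$ combined with the translation identity yields
\begin{equation*}
\widetilde\omega_{t+S}(x_0)\le(1-e^{-S})\omega_\infty+e^{-S}\widetilde\omega_t(x_0).
\end{equation*}
Taking the $n$th exterior power, using $\omega_\infty^{n-r+1}\equiv 0$ to kill all terms of $\omega_\infty$-degree strictly larger than $n-r$, and feeding in the upper bound on $u$ to control $\widetilde\omega_t^r/\Omega$, the leading contribution $(1-e^{-S})^{n-r}e^{-rS}\omega_\infty^{n-r}\wedge\widetilde\omega_t^r$ yields $u_t(t+S,x_0)\le -rS+C(S)$ up to an $e^{-S}$-adjusted $u(t,x_0)$ correction; processing this correction against the upper $u$-bound and reverse-translating $t+S\mapsto t$ produces the stated coefficient $A/(e^S-1)$. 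The lower bound on $u_t$ comes from the symmetric argument at a spatial minimum of $\Psi_S$; the asymmetry between $A/(e^S-1)$ and $A/(1-e^{-S})$ reflects which endpoint of the convex combination is controlling in each direction.

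The step I foresee as the main obstacle is the combinatorial bookkeeping in the shift-trick step, where the $S$-dependence of the auxiliary constant $C(S)$ must be tracked explicitly so that the resulting family of inequalities indexed by $S\in(0,\infty)$ has the exact form claimed, and the substitution of the $u$-bounds into the $e^{-S}u$-correction must be uniform in $x$. The $u$-bounds themselves, once $\Omega$ has been normalised so that $c=1$ in $\omega_t^n\ge c\,e^{-rt}\,\Omega$ and the remaining slack is absorbed into $A$ and $C$, are comparatively direct.
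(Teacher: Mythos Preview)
Your barrier argument for the upper bound on $u$ is essentially the same as the paper's opening observation that $\omega_t^n\le Ce^{-rt}\Omega$ forces $v=u+\frac{r}{2}t^2\le Ct$ via the maximum principle. The difficulties lie elsewhere.

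\textbf{The lower bound on $u$ has a genuine gap.} Your argument needs the pointwise inequality $\omega_t^n\ge c\,e^{-rt}\Omega$ at the spatial minimum of $u-\underline\phi$, but you only claim it on an open dense set. Expanding $\omega_t^n$ binomially, every surviving term is a multiple of $\omega_0^{k}\wedge\omega_\infty^{\,n-k}$ with $k\ge r$, and the would-be dominant term is a constant times $e^{-rt}\,\omega_0^{r}\wedge\omega_\infty^{\,n-r}$. This is bounded below by $c\,e^{-rt}\Omega$ only where $\omega_\infty^{\,n-r}>0$, i.e.\ where $dF$ has full rank $n-r$. At critical points of $F$ (singular fibres) the form $\omega_\infty^{\,n-r}$ vanishes and the leading term drops to order $e^{-(r+1)t}$, so no smooth choice of $\Omega$ rescues the inequality globally. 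The minimum of $u-\underline\phi$ could perfectly well sit on the singular locus. The paper does \emph{not} attempt a direct barrier here: it introduces the auxiliary normalised flow $w$ solving $\partial_t w=\log\frac{(\omega_t+\sqrt{-1}\partial\bar\partial w)^n}{e^{-rt}\Omega}-w$, proves $|w|\le C$ using pluripotential theory (the collapsed $L^\infty$ estimate of Demailly--Pali and Eyssidieux--Guedj--Zeriahi), and then the maximum principle applied to $v-w$ gives $|v-w|\le Ct$, hence $v\ge -C-Ct$. The pluripotential input is precisely what absorbs the singular fibres.

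\textbf{The shift trick for $u_t$ does not close.} At a spatial maximum of $\Psi_S$ you correctly obtain
\[
\widetilde\omega_{t+S}\le (1-e^{-S})\omega_\infty+e^{-S}\widetilde\omega_t,
\]
and expanding the $n$th power kills terms with $\omega_\infty$-degree larger than $n-r$. But the surviving terms are multiples of $\omega_\infty^{\,k}\wedge\widetilde\omega_t^{\,n-k}$ for $0\le k\le n-r$, and you have no control on these mixed products. The upper bound on $u$ (or even $u_t\le 0$) bounds $\widetilde\omega_t^{\,n}$, not $\omega_\infty^{\,n-r}\wedge\widetilde\omega_t^{\,r}$; the ratio $\omega_\infty^{\,n-r}\wedge\widetilde\omega_t^{\,r}/\widetilde\omega_t^{\,n}$ is governed by the reciprocals of the $\widetilde\omega_t$-eigenvalues of $\omega_\infty$ in the base directions, which are not bounded a priori. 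So the inequality you extract for $u_t(t+S,x_0)$ cannot be made uniform. The paper avoids this by working parabolically with the quantity $(1-e^{S})\,\partial_t v+v-w(t-S,\cdot)$: in its heat equation the spatial term is $\langle\widetilde\omega_t,\widehat\omega_{t-S}\rangle$, a trace of a genuine metric, so the AM--GM inequality $\langle\widetilde\omega_t,\widehat\omega_{t-S}\rangle\ge n(\widehat\omega_{t-S}^{\,n}/\widetilde\omega_t^{\,n})^{1/n}$ together with the volume bound on $\widehat\omega_{t-S}$ from Proposition~\ref{KRF-control} produces a clean lower barrier containing $e^{-\frac{1}{n}\partial_t v}$. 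The two signs of $S$ then give the upper and lower bounds on $\partial_t v$ with exactly the coefficients $A/(e^{S}-1)$ and $A/(1-e^{-S})$.

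In short, the missing ingredient in both places is the comparison flow $w$ and the pluripotential $L^\infty$ estimate that makes it bounded; without it neither the lower bound on $u$ nor the $\partial_t u$ estimates go through in the presence of singular fibres.
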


Theorem \ref{th:collapsed-volume} describes in the collapsed case, the way of $u$ going to $-\infty$ as $t\to\infty$, and provides, by choosing $S$ as large as one wants, an almost optimal rate of volume collapsing. 

This paper is organised as follows. In Section \ref{prelim}, we fix the notation and derive preliminary estimates for the modified K\"ahler-Ricci flow \eqref{eq:M-SKRF}. Section \ref{finite-time-sing} is devoted to the proof of Theorem \ref{th:finite-time-singularity}. We establish Theorem \ref{th:metric-control} in Section \ref{geom-lim}. Theorem \ref{th:collapsed-volume} is proved in Section \ref{deg-MA} and the Appendix.

\noindent {\bf Acknowledgements.} H. Wu is partially supported by the ARC grant DE180101348; Z. Zhang is partially supported by the ARC grant FT150100341.


\section{Preliminary estimates}\label{prelim}

\noindent {\bf Notation.} In this paper, we adopt the convention that $C$ denotes a positive constant that changes from line to line. When it matters, we indicate the dependency of $C$ on other quantities. We denote by $\Delta$ the Laplacian with respect to the flow metric. The expression $\<\Phi, \Psi\>$ stands for taking the trace of $\Psi$ with respect to a metric $\Phi$ (at least where it is considered), which is equivalent to taking the inner product with respect to $\Phi$.

Under the flow (\ref{eq:M-SKRF}), at a spatial maximum $u_{\max}(t)$ of $u$, we have 
$$\frac{d u_{\max}}{d t}\leqslant \log\(\max\frac{\omega^n_t}{\Omega}\)=C$$
uniformly for $[0, \infty)$. So by considering $u-Ct$ as the new $u$, or equivalently changing $\Omega$ to $e^C\Omega$, we have the decrease of $u_{\max}$ without changing the metric flow. Since the initial value of $u$ is $0$, we have
$$u\leqslant 0.$$ 

Taking the $t$-derivative of equation (\ref{eq:M-SKRF}), we get 
\begin{equation}
\label{eq:t-derivative}
\frac{\p}{\p t}\(\frac{\p u}{\p t}\)=\<\widetilde\omega_t, \frac{\p \widetilde\omega_t}{\p t}\>=\Delta\(\frac{\p u}{\p t}\)-e^{-t}\<\widetilde\omega_t, \omega_0-\omega_\infty\>.
\end{equation}
Another $t$-derivative gives 
\begin{align}
\label{eq:t-2nd-derivative}
\frac{\partial}{\partial t}\(\frac{\partial^2 u}{\partial t^2}\)  & =\<\widetilde{\omega}_t, \frac{\partial^2\widetilde{\omega}_t}{\partial t^2}\>-\(\frac{\partial\widetilde{\omega}_t}{\partial t}, \frac{\partial\widetilde{\omega}_t}{\partial t} \)_{\widetilde{\omega}_t} \\
& \leqslant\Delta \(\frac{\partial ^2 u}{\partial t^2}\)+e^{-t}\<\widetilde{\omega}_t, \omega_0-\omega_\infty\>. \notag
\end{align}

Applying the maximum principle to the sum of (\ref{eq:t-derivative}) and (\ref{eq:t-2nd-derivative}) yields
\begin{equation}
\label{2nd-upper-bound} 
\frac{\partial}{\partial t}\(\frac{\partial u}{\partial t}+u\)\leqslant \max\frac{\partial}{\partial t}\(\frac{\partial u}{\partial t}+u\)\vline_{t=0}=C,
\end{equation}
or equivalently, 
$$\frac{\p}{\p t}\(e^t\frac{\p u}{\p t}\)\leqslant Ce^t.$$
Thus, we conclude
$$\frac{\partial u}{\partial t}\leqslant C,$$ 
which gives an upper bound for the volume form ${\widetilde{\omega}_t}^n=e^{\frac{\partial u}{\partial t}}\Omega$ under the flow (\ref{eq:M-SKRF}).

As before, we now consider $u-Ct$ as the new $u$ such that 
$$\frac{\p u}{\p t}\leqslant 0$$ 
and so $u$, not just its spatial maximum, decreases along the flow. Thus, without loss of generality, we assume in the rest of this paper that 
\begin{equation}
\label{upper-bound}
u\leqslant 0, \qquad\frac{\p u}{\p t}\leqslant 0.
\end{equation}

Moreover, equation (\ref{eq:t-derivative}) can be reformulated as follows:
\begin{align*}
\frac{\p}{\p t}\(\frac{\p u}{\p t}+u\) & =\Delta\(\frac{\p u}{\p t}+u\)-n+\<\widetilde\omega_t, \omega_\infty\>+\frac{\p u}{\p t},\\
\frac{\p}{\p t}\(e^t\frac{\p u}{\p t}\) & =\Delta\(e^t\frac{\p u}{\p t}\)-\<\widetilde\omega_t,\omega_0-\omega_\infty\>+e^t\frac{\p u}{\p t}.
\end{align*}
The difference of these two equations is
\begin{align}
\label{eq:combo-1} 
\frac{\p}{\p t}\((e^t-1)\frac{\p u}{\p t}-u-nt\) & = \Delta\((e^t-1)\frac{\p u}{\p t}-u-nt\) \\
& \quad -\<\widetilde\omega_t,\omega_0\>+(e^t-1)\frac{\p u}{\p t}, \notag
\end{align}
and so 
$$\frac{\p}{\p t}\((e^t-1)\frac{\p u}{\p t}-u-nt\)< \Delta\((e^t-1)\frac{\p u}{\p t}-u-nt\)$$
since $\<\widetilde\omega_t,\omega_0\> > 0$ and $\frac{\p u}{\p t}\leqslant 0$. Then by the maximum principle, we have
$$(e^t-1)\frac{\p u}{\p t}-u-nt\leqslant \max\((e^t-1)\frac{\p u}{\p t}-u-nt\)\vline_{t=0}= 0,$$
or equivalently
\begin{equation}
\label{u_t-by-u}
(e^t-1)\frac{\p u}{\p t}-nt\leqslant u.
\end{equation}
Hence, any lower bound for $\frac{\p u}{\p t}$ implies a lower bound for $u$ at any finite time, which is also clear by the Fundamental Theorem of Calculus.   

\section{Characterisation of finite time singularities}\label{finite-time-sing}

In this section, we focus on the finite time (i.e. $0<T<\infty$) singularities of the modified K\"ahler-Ricci flow (\ref{eq:M-KRF}), or equivalently (\ref{eq:M-SKRF}). From general PDE theory, we know that $u$ or some of its spacetime derivatives will blow up, i.e. losing uniform bound in $[0, T)\times X$, which is equivalent to the loss of uniform bound for the flow metric, Riemannian curvature or its covariant derivatives. However, just as for the Ricci flow or the K\"ahler-Ricci flow, it is way more satisfying to characterise such singularity by the blow up of simple scalar geometric quantity, which is the goal of this section.      

Before diving into the details, we justify the existence of the flow limit in the sense of distribution which is comparable with the result in \cite{weak-limit-finite}. It lays the solid ground for further study of such singularities, for example, by providing the uniqueness of limit. 

Then, we give a geometric characterisation of the finite time singularity by showing the blow up of $\frac{\p u}{\p t}$ from below, i.e. degeneration of the volume form. The discussion is comparable with that in \cite{r-blow-up}, and leaves intriguing questions for further investigation.    

\subsection{Flow weak limit}\label{flow-weak-limit}

Here, we prove the existence of the flow weak limit in the finite time singularity case under a mild assumption. This prepares the ground for understanding the geometric behaviour of the flow towards the time of singularity. 

In this case, $[\omega_\infty]$ is outside the closure of the K\"ahler cone. Then the flow will develop singularity at $0<T<\infty$, and $\omega_T$ is on the boundary of the closure of the K\"ahler cone. In other words, this is the finite time singularity case. Since we have $\frac{\p u}{\p t}\leqslant 0$, by the elementary fact of pluripotential function \cite{lel69}, there exists a decreasing limit for $u$ in $PSH_{\omega_T}(X)$ as long as $u$ does not go to $-\infty$ uniformly.    

We first assume $[\omega_T]$ having a non-negative representative, i.e. there exists $f\in C^\infty(X)$ such that $\omega_T+\sqrt{-1}\p\bar\p f\geqslant 0$. In principle, this special case is weaker than $ [\omega_T]$ being semi-ample (i.e. a rational class with some multiple $k[\omega_T]$ generating a holomorphic map from $X$ to $\mathbb{CP}^N$ for some $N$), but not too different in practice.

\noindent {\bf Claim.} With the additional assumption, there is a (decreasing) flow weak limit $u_T\in PSH_{\omega_T}(X)\cap L^\infty(X)$ as $t\to T$ with the weak limit $\omega_T+\sqrt{-1}\p\bar\p u_T$ of $\widetilde\omega_t$ as $t\to T$. 

\begin{remark}

The key is the $L^\infty$ control, more precisely the uniform lower bound of the metric potential. The proof does not make use of the pluripotential theory argument as in \cite{demailly-pali}, \cite{ey-gu-ze} or \cite{zzo}. Instead, we apply purely flow techniques comparable to that in \cite{t-znote}. The weak convergence then follows in the same way as in \cite{modified}.

\end{remark}

In light of the general estimates (\ref{upper-bound}), we only need to search for the lower bound of $u$ for the $L^\infty$ control. Manipulating the variations of $t$-derivative of the flow equation in Section 2, we have 
\begin{align}
\label{eq:combo-2}
\frac{\p}{\p t}\((1-e^{t-T})\frac{\p u}{\p t}+u\) & = \Delta\((1-e^{t-T})\frac{\p u}{\p t}+u\)-n \\ & \quad +\<\widetilde\omega_t,
\omega_T\>+(1-e^{t-T})\frac{\p u}{\p t}. \notag
\end{align}

Let $F=(1-e^{t-T})\frac{\p u}{\p t}+u-f$. Then we have
\begin{equation}
\begin{split}
\frac{\p F}{\p t}&= \Delta F-n+\<\widetilde\omega_t,
\omega_T+\sqrt{-1}\p\bar\p f\>+F-u+f \\
&\geqslant \Delta F+F-C, \nonumber
\end{split}
\end{equation}
where we have used $\omega_T+\sqrt{-1}\p\bar\p f\geqslant 0$ and the upper bound of $u$. The maximum principle then gives $F\geqslant -C$ for $t\in [0, T)$.

The general upper bound for $\frac{\p u}{\p t}$ gives $F\leqslant u+C$, and so $u\geqslant -C$.

Since $u$ decreases along the flow, we conclude that as $t\to T$, the limit of $u$ satisfies $u_T\in PSH_{\omega_T}(X)\cap L^\infty(X)$. The weak convergence of $\widetilde\omega_t$ and its wedge powers (as positive currents) then follows by the classic result by Bedford and Taylor in \cite{bed-tay}.

Now we remove the assumption of $[\omega_T]$ having a non-negative representative and consider the finite time singularity case in general. $[\omega_T]$ is on the boundary of the K\"ahler cone for $X$, so there exists $\varphi\in PSH_{\omega_T}(X)$. From this fact, one can use the sequential limit construction in \cite{tian-survey}, more specifically, for the setting of the general flow discussed at the end of \cite{weak-limit-finite}. The fundamental regularisation result by Demailly (Theorem 1.6 in \cite{demailly-2015}) provides a decreasing approximation sequence $\{\varphi_m\}^\infty_{m=1}$ for $\varphi$ satisfying

\begin{itemize}

\item $\varphi_m\in PSH_{\omega_T+\frac{1}{m}\omega_0}(X)$;

\item $\varphi_m\in C^\infty(X\setminus Z_m)$ with $Z_m\subset Z_{m+1}$ being analytic subvarieties of $X$. Furthermore, $\varphi_m$ has logarithmic poles along $Z_m$, i.e. locally being the logarithm of a sum of squares of finitely many holomorphic functions, all vanishing along $Z_m$.

\end{itemize} 

We start with the following combination of (\ref{eq:combo-1}) and (\ref{eq:combo-2}):
\begin{equation} 
\begin{split}
& \frac{\partial}{\partial t}\(\frac{1}{m}[(1-e^t)\frac{\partial u}{\partial t}+u]+[(1-e^{t-T})\frac{\partial u}{\partial t}+u]\) \\
&\quad = \Delta\(\frac{1}{m}[(1-e^t)\frac{\partial u}{\partial t}+u]+[(1-e^{t-T})\frac{\partial u}{\partial t}+u]\) \\
&\qquad -\frac{n(1+m)}{m}+\<\widetilde\omega_t, \frac{1}{m}\omega_0+\omega_T\> \\
&\qquad +\(\frac{1}{m}(1-e^t)+(1-e^{t-T})\)\frac{\p u}{\p t}.
\end{split} \nonumber
\end{equation}
Let us modify this evolution equation using $\varphi_m$:  
\begin{equation} 
\label{eq:5}
\begin{split}
& \frac{\partial}{\partial t}\(\frac{1}{m}[(1-e^t)\frac{\partial u}{\partial t}+u]+[(1-e^{t-T})\frac{\partial u}{\partial t}+u]-\varphi_m\) \\
&\quad = \Delta\(\frac{1}{m}[(1-e^t)\frac{\partial u}{\partial t}+u]+[(1-e^{t-T})\frac{\partial u}{\partial t}+u]-\varphi_m\) \\
&\qquad  -\frac{n(1+m)}{m}+\<\widetilde\omega_t, \frac{1}{m}\omega_0+\omega_T+\sqrt{-1}\p\bar\p\varphi_m\> \\
&\qquad +\(\frac{1}{m}[(1-e^t)\frac{\partial u}{\partial t}+u]+[(1-e^{t-T})\frac{\partial u}{\partial t}+u]-\varphi_m\)-\frac{m+1}{m}u+\varphi_m,
\end{split}
\end{equation}
where $ \frac{1}{m}\omega_0+\omega_T+\sqrt{-1}\p\bar\p\varphi_m$ is smooth and positive over $X\setminus Z_m$. \\

\noindent{\bf Note:} We now assume $\varphi\geqslant -C$, and so $\varphi_m\geqslant -C$ for all $m$. In this case, one can use the simpler regularisation by B\l ocki-Ko\l odziej \cite{blo-koj}, where the $\varphi_m$ is smooth on $X$. The uniform upper bound is obvious. Of course, it would be very interesting to remove this additional assumption. \\

In light of the following inequality 
$$ -\frac{n(1+m)}{m}+\<\widetilde\omega_t, \frac{1}{m}\omega_0+\omega_T+\sqrt{-1}\p\bar\p\varphi_m\>-\frac{m+1}{m}u+\varphi_m\geqslant -C$$
which is uniform for $t\in [0, T)$ and $m$, we apply the maximum principle to conclude 
$$\frac{1}{m}[(1-e^t)\frac{\partial u}{\partial t}+u]+[(1-e^{t-T})\frac{\partial u}{\partial t}+u]-\varphi_m\geqslant -C,$$
which is uniform for all $m$'s over $X\times [0, T)$. Notice that the initial value of this expression clearly has a lower bound uniform for $m$'s by the uniform upper bound of $\varphi_m$'s. So we have over $X\times [0, T)$ and for all $m$'s, 
$$\(1+\frac{1}{m}\)u+\(\frac{1}{m}(1-e^t)+(1-e^{t-T})\)\frac{\p u}{\p t}\geqslant -C+\varphi_m.$$
For any fixed $t\in[0, T)$, we choose $m(t)$ large enough such that  
$$\frac{1}{m(t)}(1-e^t)+(1-e^{t-T})>0.$$
Combining with $u\leqslant 0$, $\frac{\p u}{\p t}\leqslant 0$ and $\varphi_{m(t)}\geqslant\varphi$, we conclude 
$$u\geqslant -C+\varphi,$$
which is enough to exclude the possibility of $u\to -\infty$ uniformly as $t\to T$. With the additional assumption of $\varphi\geqslant -C$, we have $u\geqslant -C$.

\subsection{Degeneration of volume form}\label{volume-form-degenerate}

We now prove that the volume form becomes degenerate in the case of $0<T<\infty$, in the sense that there cannot be a positive lower bound for the volume form towards the finite time of singularity. This and the result in Subsection \ref{flow-weak-limit} complete the proof of Theorem \ref{th:finite-time-singularity}.

We begin with the following inequality which is obtained by adjusting Lemma 5.8 in \cite{song-tian} to our modified K\"ahler-Ricci flow,
\begin{align*}
\(\frac{\p}{\p t}-\Delta\)\log\<\omega_0, \widetilde\omega_t\>
&\leqslant C\<\widetilde\omega_t, \omega_0\>+C +\frac{\<\omega_0, \Ric(\Omega)-e^{-t}(\omega_0-\omega_\infty)+\widetilde\omega_t\>}{\<\omega_0, \widetilde\omega_t\>}.
\end{align*}
For the adjustment, one just needs to realise that the difference between the evolution equations, namely (\ref{eq:M-SKRF}) and the more classic K\"ahler-Ricci flow as in \cite{song-tian}, only affects the $t$-derivative, which results in the last term on the right hand side. By the inequality, we have 
\begin{align}\label{eq:3.2-ineq1}
\(\frac{\p}{\p t}-\Delta\)\log\<\omega_0, \widetilde\omega_t\>
	\leqslant C\<\widetilde\omega_t, \omega_0\>+C+\frac{C}{\<\omega_0, \widetilde\omega_t\>}.
\end{align}
Recall the following equation,
$$\(\frac{\p}{\p t}-\Delta\)\((e^t-1)\frac{\p u}{\p t}-u\)=n+(e^t-1)\frac{\p u}{\p t}-\<\widetilde\omega_t, \omega_0\>.$$
Together with $\frac{\p u}{\p t}\leqslant 0$, we then have 
\begin{align}\label{eq:3.2-ineq2}
(e^t-1)\frac{\p u}{\p t}-u\leqslant C
\end{align}
for the finite time singularity case under study. 

Combining \eqref{eq:3.2-ineq1} and \eqref{eq:3.2-ineq2}, we arrive at
\begin{equation}
\begin{split}
&\(\frac{\p}{\p t}-\Delta\)\(\log\<\omega_0, \widetilde\omega_t\>+B\((e^t-1)\frac{\p u}{\p t}-u\)\) \\
&\quad \leqslant (C-B)\<\widetilde\omega_t, \omega_0\>+C\cdot B+\frac{C}{\<\omega_0, \widetilde\omega_t\>}, \nonumber
\end{split}
\end{equation}
where $B$ is a positive constant which will be fixed later. At the (local in time) maximum value point of this evolution quantity, i.e. $\log\<\omega_0, \widetilde\omega_t\>+B\bigl((e^t-1)\frac{\p u}{\p t}-u\bigr)$, if it is not at the initial time (otherwise we have full control), then we have
$$(C-B)\<\widetilde\omega_t, \omega_0\>+C\cdot B+\frac{C}{\<\omega_0, \widetilde\omega_t\>}\geqslant 0.$$
Recall the Cauchy-Schwarz inequality
$$\<\omega_0, \widetilde\omega_t\>\cdot\<\widetilde\omega_t, \omega_0\>\geqslant n^2,$$ 
and so we have $\frac{1}{\<\omega_0,\widetilde\omega_t\>}\leqslant\frac{\<\widetilde\omega_t, \omega_0\>}{n^2}$. Choosing $B$ sufficiently large, we have
$$\<\widetilde\omega_t, \omega_0\>\leqslant C.$$
Now notice the elementary inequality
$$\<\omega_0, \widetilde\omega_t\>\leqslant\<\widetilde\omega_t, \omega_0\>^{n-1}\cdot\frac{\widetilde\omega^n_t}
{\omega^n_0}.$$ 
Together with $\widetilde\omega^n_t=e^ {\frac{\p u}{\p t}}\Omega\leqslant C\Omega$, we arrive at 
$$\<\omega_0, \widetilde\omega_t\>\leqslant C$$
at the point under consideration. In light of the upper bound $(e^t-1)\frac{\p u}{\p t}-u\leqslant C$ (for $T<\infty$) and since we are looking at the point of maximal value, we conclude at that point and hence everywhere, 
$$\log\<\omega_0, \widetilde\omega_t\>+B\((e^t-1)\frac{\p u}{\p t}-u\)\leqslant C.$$
Using $u\leqslant 0$ and $\frac{\p u}{\p t}\leqslant 0$, we have 
$$\<\omega_0, \widetilde\omega_t\>\leqslant Ce^{-C\frac{\p u}{\p t}}.$$
Together with $\widetilde\omega^n_t=e^{\frac{\p u}{\p t}}\Omega$, we conclude 
$$\frac{1}{C}e^{C\frac{\p u}{\p t}}\omega_0\leqslant \widetilde\omega_t\leqslant Ce^{-C\frac{\p u}{\p t}}\omega_0.$$

Suppose that there is a positive lower bound for the volume form $\widetilde\omega^n_t$ uniform for $t\in [0, T)$, i.e.
$$\frac{\p u}{\p t}\geqslant -C.$$
Then we have the uniform bound for the flow metric 
$$\frac{\omega_0}{C}\leqslant\widetilde\omega_t\leqslant C\omega_0.$$
Now we can draw the contradiction as in \cite{r-blow-up} using the characterisation of K\"ahler class in \cite{demailly-paun}, which only needs the lower bound $\frac{\omega_0}{C}$. Hence, there cannot be a uniform lower bound for $\frac{\p u}{\p t}$, i.e. the volume form $\widetilde\omega^n_t=e^{\frac{\p 
u}{\p t}}\Omega$ has no uniform positive lower bound towards the singularity time $T<\infty$, which is the volume degeneration in Theorem \ref{th:finite-time-singularity}.

By the results in Subsections \ref{flow-weak-limit} and \ref{volume-form-degenerate}, Theorem \ref{th:finite-time-singularity} is proved.

Now we translate this non-existence of lower bound to a more precise geometric description along the flow. By $u\leqslant 0$ and the Fundamental Theorem of Calculus, $\frac{\p u}{\p t}\geqslant -C$ and $\frac{\p u}{\p t}+u\geqslant -C$ are equivalent for any finite time interval. So $\frac{\p u}{\p t}+u$ has no uniform lower bound either (for finite time singularity). Then in light of (\ref{2nd-upper-bound}), by considering $u-Ct$ instead of $u$ for an even larger $C$ if necessary without affecting the earlier arguments, we have
$$\frac{\p}{\p t}\(\frac{\p u}{\p t}+u\)\leqslant 0,$$
and so $\frac{\p u}{\p t}+u$ decreases along the flow. Then we know  
$$\min_{\{t\}\times X}\(\frac{\p u}{\p t}+u\)\to -\infty$$ 
as $t\to T<\infty$. Furthermore, by $u\leqslant 0$, $\frac{\p u}{\p t}\leqslant 0$ and (\ref{u_t-by-u}), we have
\begin{equation}
\label{equivalence}
e^T\frac{\p u}{\p t}-nT<e^t\frac{\p u}{\p t}-nt\leqslant \frac{\p u}{\p t}+u\leqslant \frac{\p u}{\p t}.
\end{equation} 
Hence we have $\min_{\{t\}\times X}\frac{\p u}{\p t}\to -\infty$, and so $\min_{\{t\}\times X}\frac{\widetilde\omega^n_t}{\Omega}=e^{\min_{\{t\}\times X}\frac{\p u}{\p t}}\to 0$ as $t\to T$, for a better geometric picture.

However, the attempt to search for a more pointwise statement runs into difficulties. In our setting, both $u$ and $\frac{\p u}{\p t}+u$ decrease along the flow, so let's set 
$$\lim_{t\to T} u=U, ~~~~\lim_{t\to T} \(\frac{\p u}{\p t}+u\)=V$$
with $U$ and $V$ both valued in $[-\infty, 0]$ with $-\infty$ as a possible value. At points where neither $U$ nor $V$ is $-\infty$,   
$$\lim_{t\to T} \frac{\p u}{\p t}=V-U.$$
For points where at least one function takes $-\infty$, we need to be more careful. Clearly, $\{U=-\infty\}\subset \{V=-\infty\}$ as $u\leqslant 0$, and 

\begin{itemize}

\item If $V=-\infty$ and $U>-\infty$, then $\lim_{t\to T} \frac{\p u}{\p t}=-\infty$. 

\item If $U=-\infty$ (and so $V=-\infty$), then by (\ref{u_t-by-u}), $\lim_{t\to T} \frac{\p u}{\p t}=-\infty$. 

\end{itemize}

Alternatively, it is clear from (\ref{equivalence}) that $\lim_{t\to T} \frac{\p u}{\p t}=-\infty$ exactly when it is the case for $\frac{\p u}{\p t}+u$, i.e. $V=-\infty$. Thus we conclude
$$\Big\{\frac{\p u}{\p t}+u\to -\infty\Big\}=\Big\{\frac{\p u}{\p t}\to-\infty\Big\}.$$ 
As a decreasing limit of smooth functions, we only know $V$ is upper semi-continuous, i.e. for any $p\in X$,
$$\lim_{x \to p}\sup V(x)=V(p)$$ 
with ``$=$'' replaced by ``$\leqslant$'' if we consider essential supreme instead. Although we know from the discussion $\inf_X V=-\infty$, it is unclear whether it indeed takes $-\infty$ somewhere. So it is possible for these two sets to be empty. If $U$ takes $-\infty$ somewhere, then it is also the case for $V$. However, for many interesting cases, $u$ is actually bounded, and so is $U$. Of course, the sets can also be large. For example, when $[\omega_T]^n=0$,  
$$\int_X e^V\Omega=\lim_{t\to T}\int_X e^{u+\frac{\p u}{\p t}}\Omega\leqslant \lim_{t\to T}\int_X e^{\frac{\p u}{\p t}}\Omega=\lim_{t\to T}[\omega_t]^n=[\omega_T]^n=0.$$ 
and so $V=-\infty$ almost everywhere.   

\begin{remark}

Although we make a convenient choice of $u$ by adjusting it as $u-Ct$ for some possibly large $C$, the above conclusions are obviously independent of such choice.  

\end{remark}

\section{Geometric limit}\label{geom-lim}

In this section, we seek to control the metric geometry along the modified K\"ahler-Ricci flow for the special degenerate case called canonical surgical contraction as discussed in \cite{song-weinkove, song-weinkove-2} for the classic K\"ahler-Ricci flow. 

More precisely, the singularity happens at $T\in [0, \infty]$ with $[\omega_T]$ (of course nef.) semi-ample and big by assumption. In effect, the semi-ampleness assumption gives $m[\omega_T]=F^*[\omega]$ for a positive integer $m$ and a holomorphic map 
$$F: X\to \mathbb{CP}^N,$$ 
where $\omega$ is the Fubini-Study metric and the image $F(X)$, which is singular in general, has the same dimension as $X$ which indicates the non-collapsed property, i.e. $[\omega_T]^n>0$. For our purpose, we make proper choices so that $m\omega_T=F^*\omega$. 

In general, the subset of $X$, where $dF$ is not injective, is a subvariety of $X$. It is contained in a divisor of $X$, at least when $X$ is projective. We consider such a divisor $E=\{\sigma=0\}$, where $\sigma$ is the defining section of the corresponding holomorphic line bundle and $|\cdot|$ is a Hermitian metric. Then there exists a constant $\beta>0$ such that 
$$C|\sigma|^{2\beta}\omega_0\leqslant F^*\omega\leqslant C\omega_0.$$ 
Moreover, $[\omega_T]-\epsilon E$ is K\"ahler for sufficiently small $\epsilon>0$, which essentially comes from the bigness assumption. 

For a canonical surgical contraction, if $T<\infty$, $F$ blows down $\mathbb{CP}^{n-1}$ to smooth point; if $T=\infty$, $F$ blows down chains of $\mathbb{CP}^{n-1}$ to isolated singular points. Intuitively speaking, they are the counterpart of blowing down $(-1)$ and $(-2)$-curves for complex surfaces in general dimensions. 

The main estimate of this section is the following.

\begin{theorem}
\label{th:improved-metric-estimate} In the above setting, if we further assume that for $[0, T)\times X$, there is a constant $C$ satisfying 
\begin{equation}
\label{assumption} 
\Ric(\Omega)-e^{-t}(\omega_0-\omega_\infty)\leqslant CF^*\omega, 
\end{equation} 
then under the modified K\"ahler-Ricci flow \eqref{eq:M-KRF}, $\widetilde\omega_t$ satisfies 
\begin{enumerate}

\item[a)] $\widetilde\omega_t\leqslant C|\sigma|^{-2\beta}F^* \omega$,

\item[b)] $\widetilde\omega_t\leqslant C(A)|\sigma|^{-\frac{2\beta A}{A+1}}\omega_0$ for a sufficiently large $A>0$.
 
\end{enumerate}
\end{theorem}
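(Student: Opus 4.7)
The plan is to adapt the maximum-principle arguments of Song-Weinkove \cite{song-weinkove,song-weinkove-2} for the classical K\"ahler-Ricci flow with canonical surgical contractions to the modified flow (\ref{eq:M-KRF}). The key new ingredient is hypothesis (\ref{assumption}), which is calibrated precisely so that the modified-flow correction $\Ric(\Omega)-e^{-t}(\omega_0-\omega_\infty)$, when paired with the inverse of the flow metric via a trace, is controlled by a multiple of $F^*\omega$. This is exactly the structural assumption needed to fit the extra term into the Aubin-Yau-type parabolic second-order framework built around the holomorphic map $F:X\to\mathbb{CP}^N$.

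For part (a), I would first derive on $X\setminus E$ (where $F^*\omega$ is smooth and nondegenerate) the parabolic Yau-type inequality
\begin{equation*}
\(\frac{\p}{\p t}-\Delta\)\log\text{tr}_{F^*\omega}\widetilde\omega_t\leqslant C_1\,\text{tr}_{\widetilde\omega_t}F^*\omega+C_1,
\end{equation*}
using the bounded bisectional curvature of $\omega_{FS}$ together with (\ref{assumption}) to absorb the extra term from the modified flow. Then I would analyze the auxiliary quantity
\begin{equation*}
Q_1=\log\text{tr}_{F^*\omega}\widetilde\omega_t+\beta\log|\sigma|^2_h-A u
\end{equation*}
on $(X\setminus E)\times[0,T)$: the barrier $\beta\log|\sigma|^2_h\to-\infty$ on $E$ pushes the supremum into the interior of $X\setminus E$, where the parabolic maximum principle applies after the standard domain exhaustion $\{|\sigma|\geqslant\delta\}\to X\setminus E$ as $\delta\to 0$. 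At an interior maximum, combining the above inequality with $(\p_t-\Delta)u=u_t-n+\text{tr}_{\widetilde\omega_t}\omega_t$, the bounded Poincar\'e-Lelong contribution $\sqrt{-1}\p\bar\p\log|\sigma|^2_h=-\Theta_h$, and the fact that $\omega_t$ dominates a positive multiple of $F^*\omega$ for $t$ bounded away from $0$ (from the semi-ampleness and nefness of $[\omega_T]$) should make the coefficient of $\text{tr}_{\widetilde\omega_t}F^*\omega$ strictly negative once $A$ is large. Together with the bounds $u,u_t\leqslant 0$ from Section \ref{prelim}, this yields $Q_1\leqslant C$ and hence estimate (a).

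For part (b), the plan is to run a parallel argument with the smooth K\"ahler reference $\omega_0$ in place of $F^*\omega$, now using the auxiliary quantity
\begin{equation*}
Q_A=\log\text{tr}_{\omega_0}\widetilde\omega_t+\frac{\beta A}{A+1}\log|\sigma|^2_h-(A+1)u,
\end{equation*}
which is smooth on $X\setminus E$. The corresponding parabolic inequality for $\log\text{tr}_{\omega_0}\widetilde\omega_t$ picks up a contribution from (\ref{assumption}) of the form $\text{tr}_{\widetilde\omega_t}F^*\omega\leqslant C|\sigma|^{-2\beta}\text{tr}_{\widetilde\omega_t}\omega_0$ via $F^*\omega\geqslant C|\sigma|^{2\beta}\omega_0$; at an interior maximum of $Q_A$ I would apply a weighted Young inequality with conjugate exponents $(A+1,(A+1)/A)$ to rebalance the $|\sigma|^{-2\beta}$ factor against the $(A+1)\text{tr}_{\widetilde\omega_t}\omega_t$ term coming from $-(A+1)(\p_t-\Delta)u$, producing the exponent $\frac{2\beta A}{A+1}$ in the final bound. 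The constant $C(A)$ will deteriorate as $A\to\infty$ because Young's weight becomes singular, which is consistent with the statement.

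The hard part will be closing the evolution inequality for part (a) with only $\text{tr}_{\widetilde\omega_t}F^*\omega$ (and not a more degenerate trace like $\text{tr}_{\widetilde\omega_t}\omega_0$) on the right-hand side. This is precisely the structural reason hypothesis (\ref{assumption}) must be stated in terms of $F^*\omega$ rather than $\omega_0$: a bound by $C\omega_0$ alone would inject a $\text{tr}_{\widetilde\omega_t}\omega_0\sim|\sigma|^{-2\beta}$ term that could not be absorbed by the $-Au$ contribution. A secondary subtlety is the noncompactness of $X\setminus E$ uniformly in $t\in[0,T)$, which will be handled routinely by the $\log|\sigma|^2_h$ barrier and the domain exhaustion argument.
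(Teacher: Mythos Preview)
Your two separate maximum-principle arguments do not close as written, and the paper's proof is organized quite differently: it uses a \emph{single} test quantity
\[
Q_\delta=\log\<\omega_0,\widetilde\omega_t\>+A\log\bigl(|\sigma|^{2\beta}\<F^*\omega,\widetilde\omega_t\>\bigr)+A\delta\log|\sigma|^2-ABv,
\]
combining \emph{both} traces. Both (a) and (b) are then read off from the one resulting inequality $\log\<\omega_0,\widetilde\omega_t\>+A\log\bigl(|\sigma|^{2\beta}\<F^*\omega,\widetilde\omega_t\>\bigr)\leqslant C$; the exponent $\tfrac{2\beta A}{A+1}$ comes simply from dividing by $A+1$ after using $\<F^*\omega,\widetilde\omega_t\>\geqslant C\<\omega_0,\widetilde\omega_t\>$, not from any Young-type interpolation.

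For your part (a), the barrier fails. On the degeneracy locus of $F^*\omega$ one has $\<F^*\omega,\widetilde\omega_t\>\sim|\sigma|^{-2\beta}$ (for fixed $t$), so $\log\<F^*\omega,\widetilde\omega_t\>+\beta\log|\sigma|^2$ stays bounded rather than tending to $-\infty$; the maximum of $Q_1$ need not lie in $X\setminus E$, and a domain exhaustion does not help since you have no control on $\{|\sigma|=\delta\}$. The paper handles this with the extra $A\delta\log|\sigma|^2$ term and by pairing the degenerate trace with the smooth one $\log\<\omega_0,\widetilde\omega_t\>$.

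For your part (b), the absorption step breaks down. The evolution of $\log\<\omega_0,\widetilde\omega_t\>$ produces $C\<\widetilde\omega_t,\omega_0\>$ with $C$ the bisectional-curvature constant of $\omega_0$, while the competing negative term from $-(A+1)u$ and $\tfrac{\beta A}{A+1}\log|\sigma|^2$ is, via bigness $[\omega_T]-\epsilon[E]>0$, only $-c\<\widetilde\omega_t,\omega_0\>$ with $c\sim\tfrac{\beta c'}{\epsilon}$ \emph{bounded independently of $A$}; there is no reason $c>C$. (Also, assumption (\ref{assumption}) contributes at most $\tfrac{C}{\<\omega_0,\widetilde\omega_t\>}\leqslant C\<\widetilde\omega_t,\omega_0\>$ to this inequality, with no $|\sigma|^{-2\beta}$ factor, so there is nothing for Young's inequality to rebalance.) In the paper's combined quantity, the $F^*\omega$-trace piece carries the large factor $A$, so bigness yields $-AC(B)\<\widetilde\omega_t,\omega_0\>$, which \emph{does} dominate the bisectional-curvature term once $A$ is large; this is precisely why the two traces must be coupled.
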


In general, we only care about the flow estimates as $t\to T^-$, which would help to elucidate the essence of (\ref{assumption}). We justify (\ref{assumption}) for the following interesting cases:

\begin{itemize}
	
	\item[(1)] $K_X=-c_1(X)$ semi-ample:
	
	In this case, we can take $\Ric(\Omega)\leqslant 0$. If we take $\omega_0-\omega_\infty\geqslant 0$, i.e. $\omega_0$ sufficiently positive, then 
	$$\Ric(\Omega)-e^{-t}(\omega_0-\omega_\infty)\leqslant 0$$
	which is (\ref{assumption}) for $C=0$. This is exactly the setting of Theorem \ref{th:metric-control}. 
	
	The case of $X$ being Calabi-Yau, i.e. $c_1(X)=0$ is included here. There have been studies of such degeneration using the elliptic setting, e.g. \cite{tosatti, gross-tosatti-zhang20}. 
	
	\item[(2)] $\omega_\infty=-\Ric(\Omega)$:
	
	In this case, (\ref{assumption}) reads
	$$-\omega_t=-\omega_\infty-e^{-t}(\omega_0-\omega_\infty)\leqslant CF^*\omega.$$
	For $t\in [0, T)$, $\omega_t=\alpha\omega_0+(1-\alpha)\omega_T\geqslant 0$ for $\alpha\in (0, 1]$ and $\omega_T=\frac{1}{m}F^*\omega\geqslant 0$. So (\ref{assumption}) is true for $C=0$. 
	
\end{itemize}

\begin{proof}[Proof of Theorem \ref{th:improved-metric-estimate}]

We begin by setting the normalisation of $u$. Let $v=u-\frac{\int_X u\Omega}{\int_X\Omega}$. 

In light of $m\omega_T=F^*\omega$, if $T<\infty$, then $u$ is bounded by the discussion in the beginning of Section 3, and so $|v|\leqslant C$; if $T=\infty$, then the upper bound of $v$ follows from standard argument involving Green's function, while the lower bound can be obtained in light of $\frac{\p u}{\p t}\leqslant 0$ and the generalisation of Kolodziej's result from pluripotential theory in \cite{koj98} to the semi-ample and big case in \cite{demailly-pali}, \cite{ey-gu-ze} and \cite{zzo} \footnote{In fact, we know $|u|\leqslant C$ here.}. Moreover since $\frac{\p u}{\p t}\leqslant 0$,  
$$\frac{\p v}{\p t}=\frac{\p u}{\p t}-\frac{\int_X\frac{\p u}{\p t}\Omega}{\int_X\Omega}\geqslant\frac{\p u}{\p t}.$$ 

Now we make use of the computations in \cite{song-tian} and \cite{song-weinkove}. It is straightforward to compute the following two equations:
\begin{align*}
\(\frac{\p}{\p t}-\Delta\)v & =\frac{\p v}{\p t}-n+\<\widetilde \omega_t, \omega_t\>,\\
\(\frac{\p}{\p t}-\Delta\)\log |\sigma|^2 & =\<\widetilde\omega_t, -\sqrt{-1}\p\bar\p\log |\sigma|^2\>.
\end{align*}
The following two inequalities follow from those in \cite{song-tian} and \cite{song-weinkove} with slight modifications as described at the beginning of Subsection 3.2:  
\begin{align}
\(\frac{\p}{\p t}-\Delta\)\log\<\omega_0, \widetilde\omega_t\> & \leqslant C\<\widetilde\omega_t, \omega_0\>+C+\frac{\<\omega_0, \Ric(\Omega)-e^{-t}(\omega_0-\omega_\infty)\>}{\<\omega_0, \widetilde\omega_t\>}, \label{ineq1}\\
\(\frac{\p}{\p t}-\Delta\)\log\<F^*\omega,\widetilde\omega_t\> & \leqslant C\<\widetilde\omega_t, F^*\omega\>+C+\frac{\<F^*\omega, \Ric(\Omega)-e^{-t}(\omega_0-\omega_\infty)\>}{\<F^*\omega, \widetilde\omega_t\>}, \label{ineq2}
\end{align}
where the second one is only considered where $F^*\omega>0$. The inequalities are obtained by pointwise calculation as for the classic Schwarz Lemma with the constant $C$ depending on the bisectional curvature of $\omega_0$ and $F^*\omega$, respectively. For $F^*\omega$ which is not a metric globally on $X$, $C$ is just for the bisectional curvature of $\omega$.

In light of $\<\omega_0, \Ric(\Omega)-e^{-t}(\omega_0-\omega_\infty)\>\leqslant C$ and $\frac{1}{\<\omega_0, \widetilde\omega_t\>}\leqslant\frac{\<\widetilde\omega_t, \omega_0\>}{n^2}$, \eqref{ineq1} gives
$$\(\frac{\p}{\p t}-\Delta\)\log\<\omega_0, \widetilde\omega_t\>\leqslant C\<\widetilde\omega_t, \omega_0\>
+C.$$
For \eqref{ineq2}, we use the assumption
$$\Ric(\Omega)-e^{-t}(\omega_0-\omega_\infty)\leqslant CF^*\omega$$ 
and arrive at
$$\(\frac{\p}{\p t}-\Delta\)\log\<F^*\omega, \widetilde\omega_t\>\leqslant C\<\widetilde\omega_t, F^*\omega\>
+C.$$
Now we apply the scheme as in \cite{song-weinkove} to control the flow metric. More precisely, we study the following quantity $$Q_\delta=\log\<\omega_0, \widetilde\omega_t\>+A\log\(|\sigma|^{2\beta}\<F^*\omega, \widetilde\omega_t\>\)+A\delta \log|\sigma|^2-ABv,$$ 
for positive constants $A$ and $B$ which will be fixed later and $\delta\in (0, 1]$. In light of $|\sigma|^{2\beta}C\omega_
0\leqslant F^*\omega$, we have 
$$|\sigma|^{2\beta}\<F^*\omega, \widetilde\omega_t\>\leqslant C\<\omega_0, \widetilde\omega_t\>.$$ 
So it makes sense to talk about the maximum value of $Q_\delta$ in $X\times [0, T_0]$ for any $T_0<T$, which is achieved in $X\setminus\{\sigma= 0\}$ where $Q_\delta$ is smooth. 

Recall our notation that $C$ is a positive constant that can change from line to line, and positive constants depending on $A$ or $B$ are denoted by $C(A)$, $C(B)$ or $C(A, B)$.

The previous calculations give the following control of the evolution of $Q_\delta$, 
\begin{equation}
\begin{split}
\(\frac{\p}{\p t}-\Delta\)Q_\delta&\leqslant C\<\widetilde\omega_t, \omega_0\> +A\<\widetilde\omega_t, C\omega_T-(\beta+\delta)\sqrt{-1}\p\bar\p\log |\sigma|^2-B\omega_t\> \\
&~~~~ -AB\frac{\p v}{\p t}+C(A, B). \nonumber
\end{split}
\end{equation}
For $t\in [0, T]$, $\omega_t=(1-\gamma)\omega_0+\gamma\omega_T$ with $\gamma\in [0, 1]$. As $\omega_0\geqslant C\omega_T$, $$\omega_t\geqslant (C-C\gamma+\gamma)\omega_T\geqslant \min\{C, 1\}\omega_T=C\omega_T.$$
So it follows that
$$C\omega_T-(\beta+\delta)\sqrt{-1}\p\bar\p\log |\sigma|^2-B\omega_t \leqslant (-CB+C)\omega_T-(\beta+\delta)\sqrt{-1}\p\bar\p\log |\sigma|^2.$$
Choosing $B$ sufficiently large, we have $-CB+C$ sufficiently negative so that for a properly chosen $|\cdot|$,
$$(-CB+C)\omega_T-(\beta+\delta)\sqrt{-1}\p\bar\p\log |\sigma|^2\leqslant -C(B) \omega_0$$ 
since $[\omega_T]-\epsilon E$ is K\"ahler for $\epsilon>0$ sufficiently small. Combining with $\frac{\p v}{\p t}\geqslant \frac{\p u}{\p t}$, we then obtain
$$\(\frac{\p}{\p t}-\Delta\)Q_\delta\leqslant C\<\widetilde\omega_t, \omega_0\>-A\< \widetilde\omega_t,
C(B)\omega_0\>-AB\frac{\p u}{\p t}+C (A, B).$$ 
Choosing $A$ large enough such that $C-AC(B)<-1$, then we get 
$$\(\frac{\p}{\p t}-\Delta\)Q_\delta\leqslant -\<\widetilde\omega_t, \omega_0\>-AB\frac{\p u}{\p t}+C (A, B).$$ 
At the maximum value point of $Q_\delta$, we have 
$$\<\widetilde\omega_t, \omega_0\>\leqslant C(A, B)-AB\frac{\p u}{\p t}.$$
By elementary inequality and (\ref{eq:M-SKRF}), 
$$\<\widetilde\omega_t, \omega_0\>\geqslant n\(\frac{\omega_0^n}{\widetilde\omega_t^n}\)^{\frac{1}{n}}\geqslant Ce^{-\frac{1}{n}\frac{\p u}{\p t}}.$$
Thus at the point of interest, 
$$Ce^{-\frac{1}{n}\frac{\p u}{\p t}}\leqslant C(A, B)-AB\frac{\p u}{\p t},$$
which gives $\frac{\p u}{\p t}\geqslant -C(A, B)$, and so
$$\<\widetilde\omega_t, \omega_0\>\leqslant C(A, B).$$
In light of the elementary inequality 
$$\<\omega_0, \widetilde\omega_t\>\leqslant \<\widetilde\omega_t, \omega_0\>^{n-1}\cdot\frac{\widetilde\omega_t^n}{\omega_0^n}$$ 
and the volume upper bound for $\widetilde\omega_t^n$, we have at that point,
$$\<\omega_0, \widetilde\omega_t\>\leqslant C(A, B).$$
So we have the uniform upper bound for $Q_\delta$ by $|\sigma|^{2\beta}\<F^*\omega,\widetilde\omega_t\>\leqslant C\<\omega_0, \widetilde\omega_t\>$ and the bound for $v$. Hence we conclude
$$\log\<\omega_0, \widetilde\omega_t\>+A\log\(|\sigma|^{2(\beta+\delta)}\<\omega_T, \widetilde\omega_t\>\)\leqslant C(A, B),$$
where the constant $C$ on the right hand side is independent of $\delta$ as is clear from the argument. Taking $\delta\to 0^+$, we arrive at 
\begin{equation}
\label{main-est}
\log\<\omega_0, \widetilde\omega_t\>+A\log\(|\sigma|^{2\beta}\<F^*\omega, \widetilde\omega_t\>\)\leqslant C(A, B).
\end{equation}
Since $\omega_0\leqslant C|\sigma|^{-2\beta}F^*\omega$, we have $\<\omega_0, \widetilde\omega_t\>\geqslant C|\sigma|^{2\beta}\<F^*\omega, \widetilde\omega_t\>$. Then (\ref{main-est}) gives $|\sigma|^{2\beta}\<F^*\omega, \widetilde\omega_t\>\leqslant C(A, B)$, which implies Part $a)$ in Theorem \ref{th:improved-metric-estimate}:    
$$\widetilde\omega_t\leqslant C(A, B)|\sigma|^{-2\beta}F^* \omega.$$ 
Meanwhile by $\omega_0\geqslant CF^*\omega$, (\ref{main-est}) also implies
$$(1+A)\log\<\omega_0, \widetilde\omega _t\>+\beta A\log|\sigma|^2\leqslant C(A, B),$$ 
and so $\<\omega_0, \widetilde\omega_t\>\leqslant C(A, B)|\sigma|^ {-\frac{2\beta A}{A+1}}$. Thus we conclude
$$\widetilde\omega_t\leqslant C(A, B)|\sigma|^ {-\frac{2\beta A}{A+1}}\omega_0.$$
Notice that we can fix a sufficiently large $B$ first, and then $A$ just needs to be sufficiently large, thus proving Part b) of Theorem \ref{th:improved-metric-estimate}.

Therefore, Theorem \ref{th:improved-metric-estimate} is proved.
\end{proof}

We now apply Theorem \ref{th:improved-metric-estimate} to prove Theorem \ref{th:metric-control}.

\begin{proof}[Proof of Theorem \ref{th:metric-control}] When $T=\infty$ and $\Ric(\Omega)\leqslant 0$, by taking $\omega_0-\omega_\infty\geqslant 0$, i.e. $\omega_0$ sufficiently positive,  
$$\Ric(\Omega)-e^{-t}(\omega_0-\omega_\infty)\leqslant 0.$$
So in the setting of Theorem \ref{th:metric-control}, we can apply Theorem \ref{th:improved-metric-estimate}. Then we can apply the arguments in Sections 2 and 3 of \cite{song-weinkove} and Sections 2--5 of \cite{song-weinkove-2}. 

For the canonical surgical contraction map $F$, $\beta=1$. We know from Part a) of Theorem \ref{th:improved-metric-estimate} that the flow metric is uniformly bounded in the directions of the contracted divisor. Meanwhile, since $\frac{2\beta A}{A+1}<2\beta$, Part b) of Theorem \ref{th:improved-metric-estimate} provides the control in the transverse direction and gives the uniform bound of distance to the contracted divisor. 

Since the divisor is $\mathbb{CP}^{n-1}$ and $\omega_\infty=F^*\omega$, we can justify that the contracted divisor shrinks metrically to a point. Finally, one has the Gromov-Hausdorff convergence of the flow metric to the metric over $F(X)$ which is known to be smooth away from $F(E)$ by \cite{t-znote}. This concludes the proof of Theorem \ref{th:metric-control}. 
\end{proof}

\section{Degenerate complex Monge-Amp\`ere equation}\label{deg-MA}

A major motivation to study the modified K\"ahler-Ricci flow (\ref{eq:M-KRF}) is the limiting equation (\ref{eq:D-CY}), the renowned Calabi-Yau equation with degenerate (i.e. non-K\"ahler) class $[\omega_\infty]$. With that goal, we focus on the case of $T=\infty$, i.e. when the flow develops an infinite time singularity with the limit satisfying the (degenerate) complex Monge-Amp\`ere equation in a proper sense. Here, we assume $[\omega_\infty]$ to be semi-ample, i.e. $\omega_\infty=F^*\omega$ as in Section \ref{geom-lim}, but now we no longer require $F(X)$ to be of the same dimension as $X$. Thus, we have $[\omega_\infty]^n\geqslant 0$.  

The case of $[\omega_\infty]^n>0$ is non-collapsed. For a Calabi-Yau manifold $X$, i.e. $X$ K\"ahler with $c_1(X)=0$, we take $\Omega$ with $\Ric(\Omega)=0$ and $\omega_0$ such that $\omega_0-\omega_\infty\geqslant 0$, and the flow metric of (\ref{eq:M-KRF}) converges smoothly out of the stable base locus set of $[\omega_\infty]$ by \cite{modified, yuan-convergence} to the unique solution of bounded metric potential \cite{demailly-pali, ey-gu-ze, zzo, dinew-z}. Furthermore, if $F$ is a canonical surgical contraction, then Theorem \ref{th:metric-control} gives the global convergence in the Gromov-Hausdorff sense. This corresponds to the study using the elliptic continuity method by \cite{tosatti}. 

The case of $[\omega_\infty]^n=0$ is collapsed, so the dimension of $F(X)$ (singular in general) is strictly smaller than the dimension of $X$, and $F^*\omega$ is nowhere a metric over $X$. If $X$ is a Calabi-Yau manifold, the generic fibre of $F$ is Calabi-Yau by elementary consideration. In this setting, it is more convenient to consider the following scaling of the modified K\"ahler-Ricci flow \eqref{eq:M-SKRF}, or equivalently \eqref{eq:M-KRF}, with a generic fibre of dimension $r$: 
\begin{equation}
\label{eq:M-SSKRF}
\frac{\partial v}{\partial t}={\rm log}\frac{\widetilde\omega_t^n}{e^{-rt}\Omega}={\rm log}\frac{(\omega_t+\sqrt{-1}\partial\bar{\partial}v)^n}{e^{-rt}\Omega}, \qquad v(0, \cdot)=0,
\end{equation}
where 
$$u=v-\frac{r}{2}t^2$$ and $$\widetilde\omega_t=\omega_t+\sqrt{-1}\p\bar\p u=\omega_t+\sqrt{-1}\p\bar\p v.$$ 
For the fibration $F$, we know $\omega_t^n\leqslant Ce^{-rt}\Omega$. Applying the maximum principle to equation \eqref{eq:M-SSKRF} for $v$, we have
$$v\leqslant Ct,$$
and so $u\leqslant -Ct^2$. Since $\frac{\p u}{\p t}\leqslant C$, we have
$$\frac{\p v}{\p t}=\frac{\p u}{\p t}+rt\leqslant rt+C.$$ 

In the following, we improve these bounds by comparing with the flow
\begin{equation}
\label{eq:KRF} 
\frac{\partial\widehat{\omega}_t}{\partial t}=-{\rm Ric}(\widehat{\omega}_t)+{\rm Ric}(\Omega)-\widehat\omega_t+\omega_\infty, \qquad \widehat{\omega}_0=\omega_0,
\end{equation}
where $\widehat\omega_t=\omega_t+\sqrt{-1}\p\bar\p w$ with $w$ satisfying
\begin{equation}
\label{eq:SKRF}
\frac{\partial w}{\partial t}={\rm log}\frac{\widehat\omega_t^n}{e^{-rt}\Omega}-w={\rm log}\frac{(\omega_t+\sqrt{-1}\partial\bar{\partial}w)^n}{e^{-rt}\Omega}-w, \qquad w(0, \cdot)=0.
\end{equation}
We have the following estimates from \cite{fong-z, song-tian-scalar}. A proof is summarised in the Appendix for completeness. 
\begin{prop}
\label{KRF-control}
Consider $\omega_\infty=F^*\omega$ for a map $F: X\to Y$ with possibly singular image $F(X)$, the generic fibre of dimension $r$ and a K\"ahler metric $\omega$ over $Y$. The flow (\ref{eq:KRF}), or equivalently (\ref{eq:SKRF}), exists for all $t\geqslant 0$, and we have the following uniform estimates for all time
$$|w|\leqslant C, \qquad \vline\frac{\p w}{\p t}\vline\leqslant C.$$
\end{prop}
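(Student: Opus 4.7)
The plan is to treat long-time existence and the two uniform bounds in four stages: long-time existence by cohomology, upper bounds on $w$ and $\p w/\p t$ via the parabolic maximum principle on \eqref{eq:SKRF} and its $t$-derivative, a lower bound on $w$ through degenerate pluripotential theory, and finally a lower bound on $\p w/\p t$ using the controls already in hand.

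For long-time existence, since $[\omega_0]$ is K\"ahler and $[\omega_\infty]=F^*[\omega]$ is semi-ample hence nef, the class $[\omega_t]=e^{-t}[\omega_0]+(1-e^{-t})[\omega_\infty]$ remains K\"ahler for every finite $t\geqslant 0$, so \eqref{eq:SKRF} is a standard parabolic complex Monge-Amp\`ere equation of the type covered by the general optimal existence result recalled in Section \ref{intro}. For the upper bounds, the fibration hypothesis $\dim F(X)=n-r$ forces $\omega_\infty^{n-r+1}\equiv 0$ pointwise, so binomial expansion of $\omega_t^n$ leaves only terms with $k\geqslant r$ factors of $\omega_0$, giving the key inequality $\omega_t^n\leqslant Ce^{-rt}\Omega$ uniformly. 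At a spatial maximum of $w$ we have $\widehat\omega_t\leqslant\omega_t$, so \eqref{eq:SKRF} becomes $\p w/\p t\leqslant \log C-w_{\max}$ and the maximum principle gives $w\leqslant C$. Next, differentiating \eqref{eq:SKRF} in $t$ with $\p_t\omega_t=-e^{-t}(\omega_0-\omega_\infty)$ yields
$$\(\frac{\p}{\p t}-\Delta\)\frac{\p w}{\p t}=r-\frac{\p w}{\p t}-e^{-t}\<\widehat\omega_t,\omega_0-\omega_\infty\>,$$
and, taking $\omega_0-\omega_\infty\geqslant 0$ (consistent with the choice of $\omega_0$ made in Section \ref{intro} and in the remark following Theorem \ref{th:metric-control}), the last term is non-positive, so the maximum principle gives $\p w/\p t\leqslant r+Ce^{-t}\leqslant C$. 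Combined with the flow equation this also gives $\widehat\omega_t^n\leqslant Ce^{-rt}\Omega$.

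For the lower bound on $w$, I would invoke the degenerate pluripotential estimates of \cite{demailly-pali, ey-gu-ze, zzo}. Rewriting \eqref{eq:SKRF} as $\widehat\omega_t^n=e^{w+\p w/\p t}\cdot e^{-rt}\Omega$ and using the upper bounds just obtained, the right-hand side has a uniform $L^p$ bound relative to $[\omega_t]^n$, and the Kolodziej-type $L^\infty$ estimate that is uniform as the reference class degenerates to the semi-ample boundary then gives $w\geqslant -C$. For the lower bound on $\p w/\p t$, with $|w|\leqslant C$ now available I would run the maximum principle on a combination of $\p w/\p t$ with a multiple of $w$ and, if needed, with $\log\<\omega_0,\widehat\omega_t\>$, absorbing the bad term $-e^{-t}\<\widehat\omega_t,\omega_0-\omega_\infty\>$ via a parabolic Schwarz inequality in the spirit of Section \ref{geom-lim}.

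The main obstacle is the lower bound on $w$: the collapsed scenario $[\omega_\infty]^n=0$ places the reference class on the boundary of the K\"ahler cone, where the classical Kolodziej estimate does not apply directly, and one genuinely needs the pluripotential $L^\infty$ estimates uniform in the semi-ample but possibly non-big limit, which is precisely the content of \cite{demailly-pali, ey-gu-ze, zzo}. The remaining steps are standard maximum-principle manipulations adapted from \cite{fong-z, song-tian-scalar}.
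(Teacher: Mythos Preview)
Your outline is largely aligned with the paper's Appendix proof for three of the four steps, but the final step---the lower bound on $\p w/\p t$---has a genuine gap, and the upper bound on $\p w/\p t$ relies on an extra hypothesis the proposition does not make.

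For the upper bound on $\p w/\p t$, you assume $\omega_0-\omega_\infty\geqslant 0$, which is not part of the statement of Proposition~\ref{KRF-control}. The paper instead forms the combination $(e^t-1)\frac{\p w}{\p t}-w$, whose evolution equation produces the good term $-\<\widehat\omega_t,\omega_0\>$ (always negative) in place of your sign-indefinite $-e^{-t}\<\widehat\omega_t,\omega_0-\omega_\infty\>$. This yields $\frac{\p w}{\p t}\leqslant C$ without any positivity assumption on $\omega_0-\omega_\infty$.

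The real gap is the lower bound on $\p w/\p t$. Your proposal to ``absorb the bad term via a parabolic Schwarz inequality'' does not work here: the Schwarz-type estimates of Section~\ref{geom-lim} control traces like $\<\widehat\omega_t,F^*\omega\>$ from above, but what is needed at a spacetime minimum of $\p w/\p t$ is a term that blows up as $\p w/\p t\to-\infty$, i.e.\ something of the form $Ce^{-\frac{1}{n}\p w/\p t}$. To produce this, the paper invokes the auxiliary potential $\psi$ from Proposition~2.1 of \cite{song-tian-scalar}, constructed by patching fibrewise Calabi--Yau solutions, with the crucial two-sided bound $\frac{1}{C}e^{-rt}\Omega\leqslant(\omega_t+\sqrt{-1}\p\bar\p\psi)^n\leqslant Ce^{-rt}\Omega$. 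The lower bound here is essential: it gives $\<\widehat\omega_t,\omega_t+\sqrt{-1}\p\bar\p\psi\>\geqslant Ce^{-\frac{1}{n}\p w/\p t}$ via AM--GM, and then the maximum principle applied to $\frac{\p w}{\p t}+2w-\psi$ closes. You cannot replace $\psi$ by $0$ (i.e.\ use $\omega_t$ itself), because $\omega_t^n$ has no uniform positive lower bound of order $e^{-rt}\Omega$ in the collapsed setting---indeed $\omega_t^n$ may vanish along singular fibres. This construction of $\psi$ is the missing idea in your sketch.

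A minor point: for the $L^\infty$ lower bound on $w$ in the collapsed case, the relevant references are \cite{demailly-pali} and the collapsed version of Eyssidieux--Guedj--Zeriahi rather than \cite{zzo}, which treats the big case.
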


Taking the difference of equations (\ref{eq:M-SSKRF}) and (\ref{eq:SKRF}), we get
\begin{equation}
\frac{\partial (v-w)}{\partial t}={\rm log}\frac{\bigl(\omega_t+\sqrt{-1}\partial\bar{\partial}w+\sqrt{-1}\partial\bar{\partial}(v-w)\bigr)^n}{(\omega_t+\sqrt{-1}\partial\bar{\partial}w)^n}+w, \qquad (v-w)(0, \cdot)=0. \nonumber
\end{equation}
Applying the maximum principle to $v-w$ with $|w|\leqslant C$, we arrive at
$$|v-w|\leqslant Ct,$$
and so
$$-C-Ct\leqslant v\leqslant C+Ct$$
where the upper bound can be chosen from this one or $v\leqslant Ct$ derived earlier. Replacing $\Omega$ by $e^{-C}\Omega$ and using the initial value $C$ for (\ref{eq:M-SSKRF}), and consider $v+Ct+C$ as the new $v$, then we have
$$0\leqslant v\leqslant C+At,$$ 
where we use $A$ to specify the coefficient of the linear upper bound.

The time derivative of (\ref{eq:M-SSKRF}) is
$$\frac{\p}{\p t}\(\frac{\p v}{\p t}\)=\Delta\(\frac{\p v}{\p t}\)-e^{-t}\<\widetilde\omega_t, \omega_0-\omega_\infty\>+r,$$
and we also have
$$\frac{\p}{\p t}\(\frac{\p v}{\p t}+v\)=\Delta\(\frac{\p v}{\p t}+v\)-n+r+\<\widetilde\omega_t, \omega_\infty\>+\frac{\p v}{\p t}.$$

For any constant $S$, we manipulate the two equations above to get  
$$\frac{\p}{\p t}\((1-e^S)\frac{\p v}{\p t}+v\)=\Delta\((1-e^S)\frac{\p v}{\p t}+v\)-n+r(1-e^S)+\<\widetilde\omega_t, \omega_{t-S}\>+\frac{\p v}{\p t}.$$

For $S<0$, we have
\begin{equation}
\begin{split}
&~~~~ \frac{\p}{\p t}\((1-e^S)\frac{\p v}{\p t}+v-w(t-S, \cdot)\) \\
&= \Delta\((1-e^S)\frac{\p v}{\p t}+v-w(t-S, \cdot)\)-n+r(1-e^S)+\<\widetilde\omega_t, \widehat\omega_{t-S}\>+\frac{\p v}{\p t}-\frac{\p w(t-S, \cdot)}{\p t} \\ 
&\geqslant \Delta\((1-e^S)\frac{\p v}{\p t}+v-w(t-S, \cdot)\)-C+n\(\frac{\widehat\omega_{t-S}^n}{\widetilde\omega_t^n}\)^{1/n}+\frac{\p v}{\p t}-\frac{\p w(t-S, \cdot)}{\p t} \\
&\geqslant \Delta\((1-e^S)\frac{\p v}{\p t}+v-w(t-S, \cdot)\)-C+Ce^{-\frac{1}{n}\frac{\p v}{\p t}}+\frac{\p v}{\p t},
\end{split} \nonumber
\end{equation}
where the last step makes use of Proposition \ref{KRF-control}. At the spacetime minimal value point of $(1-e^S)\frac{\p v}{\p t}+v-w(t-S, \cdot)$, if it is not at the initial time $t=0$, then 
$$0\geqslant -C+Ce^{-\frac{1}{n}\frac{\p v}{\p t}}+\frac{\p v}{\p t},$$
and so $\frac{\p v}{\p t}\geqslant -C$. In light of $v\geqslant 0$ and $|w|\leqslant C$, we conclude  
$$(1-e^S)\frac{\p v}{\p t}+v-w(t-S, \cdot)\geqslant -C.$$ 
Since $S<0$, $v\leqslant C+At$ and $|w|\leqslant C$, we arrive at 
$$\frac{\p v}{\p t}\geqslant -\frac{A}{1-e^S}t-C(S)$$
where $C(S)$ is a positive constant depending on $S$. Renaming $S$ by $-S$, then we have derived the lower bound
\begin{align}\label{lbound}
\frac{\p v}{\p t}\geqslant -\frac{A}{1-e^{-S}}t-C(S).
\end{align}
for any $S>0$.

For $S>0$, we proceed differently as follows.
\begin{equation}
\begin{split}
&~~~~ \frac{\p}{\p t}\((1-e^S)\frac{\p v}{\p t}+v-w(t-S, \cdot)\) \\
&= \Delta\((1-e^S)\frac{\p v}{\p t}+v-w(t-S, \cdot)\)-n+r(1-e^S)+\<\widetilde\omega_t, \widehat\omega_{t-S}\>+\frac{\p v}{\p t}-\frac{\p w(t-S, \cdot)}{\p t} \\ 
&\geqslant \Delta\((1-e^S)\frac{\p v}{\p t}+v-w(t-S, \cdot)\)-C-\frac{1}{e^S-1}\((1-e^S)\frac{\p v}{\p t}+v-w(t-S, \cdot)\)
\end{split} \nonumber
\end{equation}
where we've used $v\geqslant 0$ and $|w|\leqslant C$ in the last step. For the space minimum $\alpha(t)$ of $(1-e^S)\frac{\p v}{\p t}+v-w(t-S, \cdot)$ (of course for $t\geqslant S$), we have
$$\frac{d \alpha}{d t}\geqslant -C\alpha-C,$$
and so $\alpha\geqslant -C$ with these $C$'s depending on $S$. Then we have 
$$(1-e^S)\frac{\p v}{\p t}+v-w(t-S,\cdot) \geqslant -C(S).$$
As $S>0$, $v\leqslant C+At$ and $|w|\leqslant C$, we have derived the lower bound
\begin{align}\label{ubound}
\frac{\p v}{\p t}\leqslant \frac{A}{e^S-1}t+C(S)
\end{align}
with the coefficient of the linear upper bound as close to $0$ as we want by choosing $S$ sufficiently large. The constant $C(S)$ can also take care of $t\in [0, S)$. 

Let us summarise the estimates \eqref{lbound} and \eqref{ubound} as follows.

\begin{theorem}\label{th:collapsed-volume-v}
In the collapsed case, for the modified K\"ahler-Ricci flow (\ref{eq:M-SSKRF}), by properly choosing $\Omega$, we have positive constants $C$ and $A$ such that for any $S>0$,
\begin{align*}
0 & \leqslant v\leqslant C+At, \\
-\frac{A}{1-e^{-S}}t-C(S) & \leqslant \frac{\p v}{\p t}\leqslant \frac{A}{e^S-1}t+C(S),
\end{align*}
where the constant $C(S)$ depending on $S$. 
\end{theorem}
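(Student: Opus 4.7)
The plan is to establish the four bounds by comparing $v$ with the potential $w$ of the auxiliary K\"ahler-Ricci flow (\ref{eq:SKRF}), whose uniform $L^\infty$ bounds on $w$ and $\partial w/\partial t$ are furnished by Proposition \ref{KRF-control}.

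First I would handle the two-sided bound on $v$. A direct maximum principle applied to (\ref{eq:M-SSKRF}), using $\omega_t^n \leqslant C e^{-rt}\Omega$, gives $v \leqslant Ct$. Subtracting (\ref{eq:SKRF}) from (\ref{eq:M-SSKRF}) shows that $v - w$ satisfies a complex Monge--Amp\`ere type evolution with zero initial data and the bounded inhomogeneity $w$, so the maximum principle yields $|v-w| \leqslant Ct$. Combined with $|w| \leqslant C$ this gives $-C - Ct \leqslant v \leqslant C + Ct$, and a harmless renormalisation (replacing $\Omega$ by $e^{-C}\Omega$ and shifting $v$ by a constant plus linear-in-$t$ term, without altering the metric flow) produces $0 \leqslant v \leqslant C + At$.

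Next, for the two linear-in-$t$ bounds on $\partial v/\partial t$. Differentiating (\ref{eq:M-SSKRF}) in $t$ and combining with the evolution of $\partial v/\partial t + v$ produces, for any real parameter $S$, an evolution equation for $(1 - e^S)\partial v/\partial t + v$ whose source contains $\<\widetilde\omega_t, \omega_{t-S}\>$. Subtracting the time-shifted comparison potential $w(t - S,\cdot)$, so that the mixed trace $\<\widetilde\omega_t, \widehat\omega_{t-S}\>$ arises, I study
\[
G_S := (1 - e^S)\frac{\p v}{\p t} + v - w(t - S, \cdot).
\]
Its evolution has a source bounded below by $C e^{-\frac{1}{n}\p v/\p t}$ via the arithmetic--geometric mean $\<\widetilde\omega_t, \widehat\omega_{t-S}\> \geqslant n(\widehat\omega_{t-S}^n / \widetilde\omega_t^n)^{1/n}$ and Proposition \ref{KRF-control}. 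For $S < 0$ (so $1 - e^S > 0$), evaluating at a spacetime minimum of $G_S$ forces $\p v/\p t \geqslant -C$ at that point, whence $G_S \geqslant -C$ globally; using $v \leqslant C + At$ and $|w| \leqslant C$ then yields $\p v/\p t \geqslant -\tfrac{A}{1 - e^S} t - C(S)$, and relabelling $S \mapsto -S$ produces the required lower bound. For $S > 0$ (so $1 - e^S < 0$), the same rearrangement instead produces a linear dissipation term $-\tfrac{1}{e^S - 1} G_S$ in the inequality after absorbing $v \geqslant 0$ and $|w| \leqslant C$; a scalar ODE comparison $\alpha'(t) \geqslant -C\alpha - C$ applied to the spatial minimum $\alpha(t)$ of $G_S$ gives $G_S \geqslant -C(S)$, and the negative coefficient $1 - e^S$ then converts this into the upper bound $\p v/\p t \leqslant \tfrac{A}{e^S - 1} t + C(S)$.

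The main subtlety is arranging that a single family of auxiliary quantities $G_S$ produces both the upper and lower bounds through two structurally different maximum-principle arguments, with the correct $S$-dependent linear coefficients that decay like $1/(e^{|S|}-1)$ as $|S| \to \infty$. The matching time shift of the comparison flow, replacing $w(t,\cdot)$ by $w(t-S,\cdot)$, is essential: it is exactly what turns $\omega_{t-S}$ into the reference metric $\widehat\omega_{t-S}$ and lets Proposition \ref{KRF-control} absorb every error term in the evolution of $G_S$.
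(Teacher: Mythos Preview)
Your proposal is correct and follows essentially the same approach as the paper: first the $L^\infty$ bound on $v$ by differencing against the auxiliary potential $w$ and renormalising, then the two-sided bound on $\partial v/\partial t$ via the single family $G_S=(1-e^S)\partial_t v+v-w(t-S,\cdot)$, with a spacetime-minimum argument for $S<0$ (relabelled) and a spatial-minimum ODE comparison for $S>0$. The key ingredients you name---the time-shifted comparison $w(t-S,\cdot)$ producing $\<\widetilde\omega_t,\widehat\omega_{t-S}\>$, the AM--GM lower bound for that trace, and Proposition~\ref{KRF-control}---are exactly those used in the paper.
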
 

\begin{proof}[Proof of Theorem \ref{th:collapsed-volume}] Theorem \ref{th:collapsed-volume} follows from Theorem \ref{th:collapsed-volume-v} in light of $u=v-\frac{r}{2}t^2$.
\end{proof}

\section{Appendix}\label{appendix}

Proposition \ref{KRF-control} is proved using essentially the same argument as in \cite{fong-z, song-tian-scalar}, though the flow under consideration has changed. We include the detail for the readers' convenience. We also point out that the Proposition holds for the setting way more general than $F$ being a fibre bundle, see for example Section 6 of \cite{fong-z}.  

\begin{proof}[Proof of Proposition \ref{KRF-control}]
In the setting of the proposition, $\omega_t^n$ is only bounded from above by some constant multiple of $e^{-rt}\Omega$. So applying the maximum principle to (\ref{eq:SKRF}) only gives
$$w\leqslant C.$$ 
Taking $t$-derivative for (\ref{eq:SKRF}), we get
$$\frac{\p}{\p t}\(\frac{\p w}{\p t}\)=\Delta\(\frac{\p w}{\p t}\)-e^{-t}\<\widehat\omega_t, \omega_0-\omega_\infty\>-\frac{\p w}{\p t}+r,$$
where $\Delta$ is with respect to the flow metric $\widehat\omega_t$. The above equation can be transformed to 
$$\frac{\p}{\p t}\(\frac{\p w}{\p t}+w\)=\Delta\(\frac{\p w}{\p t}+w\)-n+r+\<\widehat\omega_t, \omega_\infty\>,$$
$$\frac{\p}{\p t}\(e^t\frac{\p w}{\p t}\)=\Delta\(\frac{\p w}{\p t}\)-\<\widehat\omega_t, \omega_0-\omega_\infty\>+re^t.$$
The difference of the these two equations is
$$\frac{\p}{\p t}\((e^t-1)\frac{\p w}{\p t}-w\)=\Delta\((e^t-1)\frac{\p w}{\p t}-w\)+n-r+re^t-\<\widehat\omega_t, \omega_0\>.$$
It then follows that
$$\frac{\p}{\p t}\((e^t-1)\frac{\p w}{\p t}-w-(n-r)t-re^t\)\leqslant \Delta\((e^t-1)\frac{\p w}{\p t}-w-(n-r)t-re^t\).$$
By the maximum principle, we then get
$$(e^t-1)\frac{\p w}{\p t}-w-(n-r)t-re^t\leqslant C,$$
and so 
$$\frac{\p w}{\p t}\leqslant \frac{w+(n-r)t+re^t+C}{e^t-1},$$
which gives $\frac{\p w}{\p t}\leqslant C$ in light of the uniform upper bound for $w$ and the obvious case for short time. Then the result from pluripotential theory in \cite{demailly-pali, ey-gu-ze-collapsed} can be applied to (\ref{eq:SKRF}) to give 
$$|w|\leqslant C.$$ 

Now we recycle the proof of Proposition 2.1 in \cite{song-tian-scalar}. By patching up solutions for the corresponding complex Monge-Amp\`ere equations, we end up with a spacetime function $\psi$ such that 
$$\frac{1}{C}e^{-rt}\Omega\leqslant (\omega_t+\sqrt{-1}\p\bar\p \psi)^n\leqslant Ce^{-rt}\Omega, \quad |\psi|\leqslant C, \quad\text{and}\quad \vline\frac{\p \psi}{\p t}\vline\leqslant C.$$ 
Now adding up the following two inequalities
$$\frac{\p}{\p t}\(\frac{\p w}{\p t}+w\)\geqslant\Delta\(\frac{\p w}{\p t}+w\)-n+r,$$
\begin{equation}
\begin{split}
\frac{\p (w-\psi)}{\p t}
&= \Delta (w-\psi)-n+\<\widehat\omega_t, \omega_t+\sqrt{-1}\p\bar\p \psi\>+\frac{\p w}{\p t}-\frac{\p \psi}{\p t} \\
&\geqslant \Delta (w-\psi)-n+n\(\frac{(\omega_t+\sqrt{-1}\p\bar\p \psi)^n}{\widehat\omega_t^n}\)^{1/n}+\frac{\p w}{\p t} \\
&\geqslant \Delta (w-\psi)-n+\frac{\p w}{\p t}+C\(\frac{e^{-rt}\Omega}{e^{\frac{\p w}{\p t}+w}e^{-rt}\Omega}\)^{1/n} \\
&\geqslant \Delta (w-\psi)-n+\frac{\p w}{\p t}+Ce^{-\frac{1}{n}\frac{\p w}{\p t}},
\end{split} \nonumber
\end{equation}
we then arrive at
$$\frac{\p}{\p t}\(\frac{\p w}{\p t}+2w-\psi\)\geqslant \Delta\(\frac{\p w}{\p t}+2w-\psi\)-C+\frac{\p w}{\p t}+Ce^{-\frac{1}{n}\frac{\p w}{\p t}}.$$
At the spacetime minimal value point of $\frac{\p w}{\p t}+2w-\psi$, $\frac{\p w}{\p t}$ has a uniform lower bound by the maximum principle. Combining with the bounds of $w$ and $\psi$, we obtain a uniform lower bound of $\frac{\p w}{\p t}$. 

Proposition \ref{KRF-control} is thus proven. 
\end{proof}

\bibliography{mkrf-survey}

\end{document}